\author{Theodore~Th. Voronov}
\address{School of Mathematics, University of Manchester, Manchester, M60 1QD, UK}
\email{theodore.voronov@manchester.ac.uk}
\address{Dept. of Quantum Field Theory, Tomsk State University, Tomsk, 634050, Russia}
\title[``Nonlinear  pullbacks'' and homotopy Poisson structures]{``Nonlinear  pullbacks'' of functions and $L_{\infty}$-morphisms for homotopy Poisson structures}
\subjclass[2010]{58A99;  
70H15;  
58C50;   
58D15;   
17B55  
}
\keywords{nonlinear pullback, canonical relation, $L_{\infty}$-morphism, Hamilton--Jacobi equation, homotopy Poisson structure, homotopy Schouten structure}
\newcommand{\new}[1]{{#1}} 
\newtheorem{theorem}{Theorem}
\newtheorem*{theore}{Theorem}
\newtheorem*{coro}{Corollary}
\newtheorem{lemde}[theorem]{Theorem-Definition}
\theoremstyle{definition}
\newtheorem{example}{Example}
\newtheorem{remark}{Remark}
\def\co{\colon\thinspace}
\renewcommand{\leq}{\leqslant}
\renewcommand{\geq}{\geqslant}
\DeclareMathOperator{\fun}{\mathit{C^{\infty}}}
\DeclareMathOperator{\funn}{\mathbf{C^{\,\infty}}}
\DeclareMathOperator{\pfunn}{\mathbf{\Pi\!C^{\,\infty}}}
\newcommand{\der}[2]{{\frac{\partial {#1}}{\partial {#2}}}}
\newcommand{\lder}[2]{{\partial {#1}/\partial {#2}}}
\newcommand{\dder}[3]{{\frac{\partial^2 {#1}}{\partial {#2}\partial {#3}}}}
\newcommand{\var}[2]{{\frac{\delta {#1}{ }}{\delta {#2}}}}
\newcommand{\Z}{{\mathbb Z_{2}}}
\newcommand{\p}{\partial}
\newcommand{\widebar}{\overline}
\renewcommand{\a}{\alpha}
\newcommand{\e}{\varepsilon}
\newcommand{\s}{\sigma}
\newcommand{\f}{{\varphi}}
\renewcommand{\o}{\omega}
\newcommand{\h}{\eta}
\newcommand{\F}{{\Phi}}
\renewcommand{\d}{\delta}
\renewcommand{\L}{{\Lambda}}
\newcommand{\itt}{{\tilde \imath}}
\newcommand{\at}{{\tilde a}}
\newcommand{\Ht}{{\tilde H}}
\newcommand{\Ft}{{\tilde F}}
\newcommand{\Gt}{{\tilde G}}
\newcommand{\ps}{{{\psi}}}
\newcommand{\X}{{\boldsymbol{X}}}
\newcommand{\Q}{{\boldsymbol{Q}}}
\DeclareMathOperator{\Vect}{\mathrm{Vect}}
\newcommand{\bL}{\mathbf{L}}
\newcommand{\lsch}{{[\![}}
\newcommand{\rsch}{{]\!]}}
\renewcommand{\S}{\Theta}
\date{6 (19) September 2016}
\begin{document}
\begin{abstract}
We introduce  mappings between  spaces of functions on   (super)manifolds that  generalize    pullbacks  with respect to   smooth maps  but  are, in general,  nonlinear (actually, formal). The construction is based on canonical relations and generating functions. (The underlying structure is a   formal category, which is a ``thickening'' of the usual category of supermanifolds; it   is  close to the category of symplectic micromanifolds and their micromorphisms considered recently by A.~Weinstein and A.~Cattaneo--B.~Dherin--A.~Weinstein.) There are two parallel settings, for even and odd functions. As an application, we show how such nonlinear pullbacks give  $L_{\infty}$-morphisms for algebras of functions on homotopy Schouten or homotopy Poisson manifolds.
\end{abstract}

\maketitle

\section*{Introduction}

In this paper we introduce and study a certain notion of a ``generalized pullback'' for functions on smooth manifolds or supermanifolds. It has two characteristic features: it is, in general, a \emph{nonlinear} mapping between the  spaces of functions (actually, it is given by a formal nonlinear differential operator), and it contains the usual pullback of functions as a particular case (when it is of course linear). Being nonlinear, such a ``generalized pullback'' cannot be an algebra homomorphism; however, it has the property that its derivative at each point is an algebra homomorphism.

The underlying ``generalized morphisms'' of (super)manifolds  are   certain canonical relations between the corresponding  cotangent bundles. Recall that a canonical relation (or a canonical correspondence) between symplectic manifolds is a Lagrangian submanifold in the direct product endowed with the difference of the symplectic forms. Such are the graphs of symplectomorphisms, hence canonical relations are usually seen as a generalization of symplectomorphisms.  Our viewpoint is   different (so thinking of symplectomorphisms is not useful  for   understanding) and can be explained as follows.  Let $\f\co M_1\to M_2$ be a usual smooth map.  It does not induce, in general, any map of the cotangent bundles. (The exception is the case of a diffeomorphism.) Nevertheless it gives a relation $R_{\f}\subset T^*M_1\times (-T^*M_2)$, which, as one can see, is a Lagrangian submanifold (here the minus sign means the negative of the symplectic form). This is our point of departure. As a generalization of maps $\f\co M_1\to M_2$ we consider canonical relations $\F\subset T^*M_1\times (-T^*M_2)$ of the type   ``closest to those of the form $R_{\f}$''. 
The latter requirement, as we will see, forces us to consider \emph{formal relations}, i.e., which live as submanifolds in the formal neighborhood of the zero section. All our constructions are therefore \emph{microformal} (based on power series in momentum variables).

Replacing ordinary maps by relations  and considering categories of relations is a very classical idea; in particular, the unifying role of canonical relations has been stressed by Weinstein~\cite{weinstein:symplectic-geometry1981, weinstein:symplcat82}.\footnote{Other manifestations of this idea include   additive relations in homological algebra~\cite{maclane:homology} and  the  Berezin--Neretin spinor representations of classical categories~\cite{berezin:second},~\cite{neretin:categories,neretin:categories-book}.}  The novelty  is not in  the use of relations as such, but in our choice of a particular class of relations and in their treatment for defining an analog of pullbacks. We see a relation $\F\subset T^*M_1\times (-T^*M_2)$   as a `morphism' not between $T^*M_1$ and $T^*M_2$, but   between $M_1$ and $M_2$. This is crucial and leads to the idea of a \textbf{pullback}
\begin{equation*}
    \F^*\co \funn(M_2)\to \funn(M_1)\,,
\end{equation*}
which we define as follows. Suppose $g\in \funn(M_2)$ is a function on $M_2$. The pullback $\F^*$ should map it to a function $f\in \funn(M_1)$. Consider the graph of the derivative $\Lambda_g\subset T^*M_2$. It is a Lagrangian submanifold. Then $f$ can be defined by the formula:
\begin{equation*}
    \Lambda_f:=\F\circ \Lambda_g \subset T^*M_1\,,
\end{equation*}
where $\circ$ denotes the composition of relations; one needs to show that $\F\circ \Lambda_g$ has the form $\Lambda_f$. More precisely, this defines the function $f$ up to a constant of integration, so our constructions involve a choice of constants. We assume that $\F\subset T^*M_1\times (-T^*M_2)$ can be specified by a generating function $S(x,q)$ depending on position variables on $M_1$ and momentum variables on $M_2$, and we treat $S$ as part of structure. Then the pullback $f=\F^*[g]$ introduced above in an abstract way is given explicitly by
\begin{equation*}
   \boxed{\quad f(x)=g(y) + S(x,q) - y^iq_i\,, \quad\vphantom{\int_0^1} }
\end{equation*}
where to eliminate the variables $q$ and $y$ one should use  the system of equations $q_i=\lder{g}{y^i}$ and $y^i=\lder{S}{q_i}$\,. It is solved by iterations, which gives $y$ as a function of $x$ depending in general on $g$ and its derivatives. Therefore the resulting $f(x)$ is expressed in $g(y)$ with all its derivatives perturbatively and nonlinearly, as a formal power series. (This is one of the many instances where formality enters the picture. Actually, the very assumption that we can use $S(x,q)$ implies a formal framework.) Remarkably, when $\F=R_{\f}$, the equations decouple, the nonlinearity disappears and the formula gives the ordinary pullback $\f^*g$.

We came to this construction motivated by the following task. Suppose a supermanifold $M$ is endowed with a homotopy analog of a Poisson structure. That means that\,---\,instead of a familiar bracket with two arguments\,---\,there is a whole sequence of brackets including binary, but also unary, ternary, etc., so that in particular the Jacobi identity for the binary bracket is satisfied up to an algebraic homotopy, where the ternary bracket is the homotopy and the unary bracket is the differential, and there are further identities involving the `higher homotopies'. Speaking more formally, there is an $L_{\infty}$-algebra  structure on functions on $M$ such that all the brackets are multiderivations with respect to the ordinary product of functions. (Actually, there are two different types of such structures on $M$, a `homotopy Poisson' and a `homotopy Schouten' structures, which differ by the parities of the brackets.) The problem is how to construct $L_{\infty}$-morphisms of such homotopy  structures. Let us concentrate on the homotopy Schouten case, for   concreteness. From general theory it is known that the most efficient description for $L_{\infty}$-algebras is that of $Q$-manifolds, i.e., supermanifolds endowed with homological vector fields (see, e.g.,~\cite{kontsevich:quant, kontsevich:quantlmp}). $L_{\infty}$-morphisms then correspond to maps of supermanifolds intertwining the corresponding homological  vector fields.  In the considered example,  homological vector fields   live on infinite-dimensional ``functional'' supermanifolds such as  $\funn(M)$  and one has to construct  mappings between them. This is a problem formulated in terms of infinite-dimensional geometry. On the other hand, a homotopy  Schouten structure has a  convenient finite-dimensional description in terms of differential-geometric objects on $M$ itself: it  is specified by an odd function $H$ on $T^*M$ satisfying $(H,H)=0$ for the canonical Poisson   bracket. Hence the question: is it possible to construct  $L_{\infty}$-morphisms in these terms?
Nonlinear pullbacks $\Phi^*$ that we introduce give the desired solution.

More precisely, we proved the following theorem. If  two odd Hamiltonians $H_1$ and $H_2$ specifying homotopy Schouten structures on $M_1$ and $M_2$ satisfy $p_1^*H_1=p_2^*H_2$ on $\F\subset T^*M_1\times (-T^*M_2)$ as above (in other words, if they are `$\Phi$-related',  for  $\Phi$   regarded as a `microformal morphism' from $M_1$ to $M_2$), then the pullback $\F^*\co \funn(M_2)\to \funn(M_1)$ is an $L_{\infty}$-morphism of the homotopy Schouten algebras. (Note that the   possible   nonlinearity of $\F^*$ is essential for obtaining nontrivial $L_{\infty}$-morphisms.)

Nonlinear pullbacks $\F^*$  as above are applicable to \textbf{even} functions. There is a parallel construction that works for \textbf{odd} functions and gives a different `formal thickening' of the category of smooth supermanifolds, with nonlinear pullbacks
\begin{equation*}
    \Psi^*\co \pfunn(M_2)\to \pfunn(M_1)\,
\end{equation*}
(here $\Pi$ is the parity reversion functor and `points' of $\pfunn(M)$ are odd functions on $M$).
It is based on   the anticotangent bundles $\Pi T^*M$ with the canonical odd symplectic structure.
There is a similar application to homotopy  Poisson  manifolds.

The formal categorical structure outlined only briefly in this paper, and  further applications to vector bundles and algebroids, are developed in~\cite{tv:microformal}. A certain `quantum' version  is constructed in~\cite{tv:oscil, tv:qumicro}.

In the course of this work, I greatly benefited from
discussions with H.~M.~Khudaverdian. Discussions
with K.~C.~H.~Mackenzie on various aspects of Lie algebroids and bialgebroids were a source of inspiration. I am very grateful to
A.~Weinstein and J.~Stasheff for their comments on early versions of this paper and to the anonymous referee for the remarks that helped to improve the exposition.

\section{Main construction}\label{sec.mainconstr}

Consider smooth manifolds or supermanifolds $M_1$ and $M_2$. We shall define a mapping of smooth functions on $M_2$ to smooth functions on $M_1$ which generalizes the pullback w.r.t. a smooth map $\f\co M_1\to M_2$. This mapping is, in general, nonlinear\,---\,actually, it will be defined as a formal power series\,---\,and has  an ordinary pullback as a particular case, in which is it is linear. (Moreover, as we shall see, ordinary pullbacks appear as linearizations or derivatives for our construction.)

\begin{remark} Constructions in this section are not specifically super, so the reader may  assume, initially, that we work with ordinary manifolds and ignore signs related with the supercase. Considering supermanifolds is necessary only for applications.
\end{remark}

To emphasize that we consider spaces of smooth functions as infinite-dimensional manifolds rather than vector spaces, we use boldface letters for the notation, e.g., $\funn(M)$ instead of $\fun(M)$.

\begin{remark}
This distinction is important for supermanifolds where $\funn(M)$ is itself an infinite-dimensional supermanifold  whose `points' are,  by definition, \emph{even} functions on $M$  possibly depending  on auxiliary odd parameters, unlike     elements of the $\Z$-graded vector space $\fun(M)$, which may be even or odd. Indeed, for consider unambiguously non-linear expressions involving $f(x)$, etc., the parity of $f$ should be fixed, since even and odd functions satisfy different commutativity constraints. (Similarly, odd functions on $M$ should be regarded as `points' of the infinite-dimensional supermanifold $\pfunn(M)$ corresponding to the $\Z$-graded vector space $\Pi\!\fun(M)$.)
\end{remark}

We shall use the language of canonical relations.
Consider the cotangent bundles $T^*M_1$ and $T^*M_2$. Denote local coordinates on $M_1$ and $M_2$ by $x^a$ and $y^i$, and let $p_a$ and $q_i$ stand for the corresponding conjugate momenta; so the canonical symplectic forms are $\o_1=dp_adx^a$ and $\o_2=dq_idy^i$, respectively.  We use local coordinates here because it is the most efficient language, though of course everything can be rephrased in a coordinate-free way.
It is well known that canonical relations between symplectic manifolds arise as graphs of  canonical transformations (symplectomorphisms) and may be seen as their generalizations. For   cotangent bundles, they also arise when one considers the effect of smooth maps of the bases. This is our starting point.

\begin{example} \label{ex.rphi}
Suppose $\f\co M_1\to M_2$ is a smooth map. It induces the following diagram for the cotangent bundles:
\begin{diagram}
T^*M_1 & \lTo^{{\;T^*\f}} & \f^*(T^*M_2) & \rTo^{\widebar{\f}} & T^*M_2\\
& \rdTo &    \dTo  & &  \dTo \\
& & M_1 & \rTo^{\f} & M_2
\end{diagram}
where the map $\widebar{\f}$ is fiberwise identical and at each point $x\in M_1$ the map $T^*\f(x)=(T\f)^*(x)$ is the adjoint of the tangent map. In coordinates,  $\f^*(y^i)=\f^i(x)$ and
\begin{align*}
\widebar{\f}&\co (x^a,q_i) \mapsto (y^i, q_i)  \  \ \text{where} \  \ y^i=\f^i(x)\,,\\
    T^*\f&\co (x^a,q_i)\mapsto (x^a,p_a)  \  \ \text{where} \  \ p_a=\der{\f^i}{x^a}(x)q_i\,.
\end{align*}
 We define a relation $R_{\f}\subset T^*M_1\times T^*M_2$ as follows:
\begin{equation}\label{eq.relrphi}
    R_{\f}=\left\{\,(x^a,p_a,y^i,q_i)\ \Bigl| \ p_a=\der{\f^i}{x^a}(x)q_i\,, \ y^i=\f^i(x)\,\Bigr.\right\}\,.
\end{equation}
In other words, $R_{\f}$ is the composition of the graphs of $T^*\f$ and $\widebar{\f}$. Note that the dimension of $R_{\f}$ is exactly half of the dimension of $T^*M_1\times T^*M_2$.  For the  pullbacks of the   symplectic forms $\o_1$  and $\o_2$ on $R_{\f}$ we obtain
\begin{align*}
    p_1^*\o_1&= p_1^*(dp_adx^a)=d \Bigl(\,\der{\f^i}{x^a}\,q_i\Bigr) dx^a= d \Bigl(\,\der{\f^i}{x^a}\,q_idx^a\Bigr) = d \Bigl(\,d\f^i q_i \Bigr)=dq_i d\f^i\,, \intertext{and}
    p_2^*\o_2&= p_2^*(dq_idy^i)= d q_i d\f^i\,.
\end{align*}
Therefore $p_1^*\o_1=p_2^*\o_2$ on $R_{\f}$, and so $R_{\f}$ is a Lagrangian submanifold in $T^*M_1\times T^*M_2$ considered with the symplectic structure $p_1^*\o_1-p_2^*\o_2$\,, i.e., a canonical relation.
\end{example}

This example serves as a model for our general construction. Denote by $T^*M_1\times (-T^*M_2)$ the symplectic manifold $T^*M_1\times T^*M_2$ considered with the symplectic form $\o_1-\o_2$ (where we have suppressed $p_1^*$ and $p_2^*$). Consider a canonical relation
\begin{equation*}
    \Phi\subset T^*M_1\times (-T^*M_2)
\end{equation*}
of the form generalizing~\eqref{eq.relrphi}\,,
\begin{equation}\label{eq.relr}
    \Phi =\left\{\,(x^a,p_a,y^i,q_i)\ \Bigl| \ p_a=\ps_a(x,q)\,, \ y^i=\f^i(x,q)\,\Bigr.\right\}\,.
\end{equation}
(We use the letter $\Phi$ is a reminiscent of a map of manifolds $\f$.) The condition that $\Phi$ is canonical implies that it can be described by a generating function $S=S(x,q)$  so that
\begin{equation} \label{eq.relr1.5}
    p_adx^a- (-1)^{\itt+1}y^idq_i =dS
\end{equation}
on $\Phi$. (Note that $p_adx^a- (-1)^{\itt+1}y^idq_i=  p_adx^a- q_idy^i + d(y^iq_i)$.) In other words, our relation has the form
\begin{equation}\label{eq.relr2}
    \Phi =\left\{\,(x^a,p_a,y^i,q_i)\ \Bigl| \ p_a=\der{S}{x^a}\,(x,q)\,, \ y^i=(-1)^{\itt}\der{S}{q_i}\,(x,q)\,\Bigr.\right\}\,,
\end{equation}
for a generating function $S=S(x,q)$.
(In particular,~\eqref{eq.relrphi} is recovered from~\eqref{eq.relr2} for $S=\f^i(x)q_i$\,.)

A good time to quote Arnold: ``Before turning to the apparatus of generating functions, we remark that it is unfortunately noninvariant\footnote{In the Russian original it sounds even stronger: ``depressingly noninvariant''.} and it uses, in an essential way, the coordinate structure in phase space''~\cite[p. 258]{arnold:mathmethodseng}.

\begin{remark} Recall that if $\L\subset N$ is a Lagrangian submanifold of a symplectic manifold, then in each local Darboux coordinate system on $N$ one can choose  half of the coordinates 
so that they are independent on $\L$ and  the remaining  canonically conjugated coordinates  are expressed as the partial derivatives  (up to signs) of a function of the coordinates from the first group. This function, by definition, is a \emph{generating function} for $\L$. By construction, it is a coordinate-dependent object. A particular type of such a function corresponds to   a choice of Darboux coordinates on $N$ taken as independent coordinates on the Lagrangian submanifold $\L$. Not all choices may be possible. In our situation, we  \emph{postulate} the possibility of taking $x^a,q_i$ as independent coordinates on $\Phi\subset T^*M_1\times (-T^*M_2)$ and hence the possibility of expressing our canonical relation in the form~\eqref{eq.relr}, \eqref{eq.relr2}.
\end{remark}

Now we shall define an analog of   pullback of  functions using a canonical relation $\Phi$ of the form~\eqref{eq.relr2} as a replacement of a smooth map $\f\co M_1\to M_2$. Consider an even function $g\in \funn(M_2)$.  The graph of its derivative is a Lagrangian submanifold $\L_g\subset T^*M_2$. In coordinates,
\begin{equation}\label{eq.grdg}
    \L_g =\left\{ (y^i,q_i)\ \Bigl| \ q_i=\der{g}{y^i}\,(y)\,\Bigr.\right\}\,.
\end{equation}
Take the  composition  of the relation $\Phi\subset T^*M_1\times (-T^*M_2)$ with the  submanifold $\L_g\subset T^*M_2$. It is a submanifold in $T^*M_1$,
\begin{equation*}
    \L:= \Phi\circ \L_g = p_1\left(\Phi\cap p_2^{-1}(\L_g)\right)\subset T^*M_1\,.
\end{equation*}
In coordinates,
\begin{equation}\label{eq.lam}
    \L =\left\{ (x^a,p_a)\ \Bigl| \ p_a=\der{S}{x^a}\,(x,q)\,, \ q_i=\der{g}{y^i}\,(y)\,, \ y^i=(-1)^{\itt}\der{S}{q_i}\,(x,q)\,\Bigr.\right\}\,.
\end{equation}

\begin{lemde}[preliminary version] \label{thm.one}
The submanifold $\L\subset T^*M_1$ is Lagrangian and is the graph of the derivative of a function $f\in \funn(M_1)$, $\L=\L_f$, which is given by
\begin{equation}\label{eq.f}
    f(x) =  g(y)+ S(x,q) - y^iq_i\,,
\end{equation}
where $q_i$ and $y^i$ in~\eqref{eq.f} are determined from the equations
\begin{align}\label{eq.q}
    q_i&=\der{g}{y^i}\,(y)\,, \\
    y^i&=(-1)^{\itt}\der{S}{q_i}\,\bigl(x,\der{g}{y}\,(y)\bigr)\,. \label{eq.y}
\end{align}
The function $f\in \funn(M_1)$ defined by Eq.~\eqref{eq.f}, together with Eqs.~\eqref{eq.q} and~\eqref{eq.y}, is called the \textbf{generalized pullback} of a function $g\in \funn(M_2)$ w.r.t. a canonical relation $\Phi\subset T^*M_1\times (-T^*M_2)$. Notation: $f=\Phi^*[g]$\,.
\end{lemde}
\begin{proof} The fact that $\L\subset T^*M_1$ is Lagrangian actually follows from general theory: the composition of Lagrangian relations, when it is well-defined, is Lagrangian. We shall check it directly together with establishing~\eqref{eq.f}.  On $\Phi$,   $p_adx^a- q_idy^i =dS- d(y^iq_i)$ (see~\eqref{eq.relr1.5}). By substituting $q_idy^i=dg$, we obtain that on $\L$, $p_adx^a =dg+ dS- d(y^iq_i)$, so $p_adx^a=df$ with $f$ given by~\eqref{eq.f}. In particular, $\L$   has the correct dimension, so it is indeed Lagrangian and $\L=\L_f$, as claimed.
Formulas~\eqref{eq.q} and \eqref{eq.y} follow from the definitions of $\Phi$ and $\L_g$\,. We shall   see that the construction of $f$ by formulas~\eqref{eq.f},\eqref{eq.q},\eqref{eq.y} does not depend on a choice of coordinates and   $f$ is globally defined.  This  requires some further clarifications that will be provided below after considering examples.
\end{proof}

Let us analyze formulas~\eqref{eq.f}, \eqref{eq.q} and \eqref{eq.y}  defining together the map $\Phi^*$.
In order to find the function $f=\Phi^*[g]\in\funn(M_1)$ from~\eqref{eq.f},  one has to solve equation~\eqref{eq.y} so to express $y$ as a function of $x$ and then substitute $y=y(x)$ into~\eqref{eq.q} and~\eqref{eq.f}.
The function $g\in\funn(M_2)$, the functional argument of the  mapping $\Phi^*$, enters the equation for $y$ through the derivative $\lder{g}{y}$. Ultimately, the function $g$ appears in~\eqref{eq.f} both explicitly as $g(y)$ and implicitly  through the variables $y$  and   $q$. To see the whole procedure better, we consider  examples (which will also lead us to promised clarifications).

\begin{example}[image of   zero] Let   $g=0\in \funn(M_2)$. We get $q_i=\lder{g}{y^i}=0$, so
\begin{equation*}
    f(x) =  S(x,0) - 0 +0=S_0(x)\,,
\end{equation*}
where we denoted $S_0(x):=S(x,0)$.  Therefore
\begin{equation}
    \Phi^*[0]=S_0\,.
\end{equation}
This is a fixed (even) function on $M_1$.
\end{example}

Let us write $S(x,q)$ in the form of a power expansion in $q_i$\,:
\begin{equation}\label{eq.sexpanded}
    S(x,q)=S_0(x)+ \f^i(x)q_i+\frac{1}{2}\,\new{S^{ij}}(x)q_jq_i+\frac{1}{3!}\,\new{S^{ijk}}(x)q_kq_jq_i + \ldots \
\end{equation}
(the reason for the choice of notation   $\f^i(x)$ will become clear shortly). 

Consider first a special case.

\begin{example}[ordinary pullback with a shift] \label{ex.pullbackshift}
Suppose
\begin{equation}\label{eq.sexpandedlinear}
    S(x,q)=S_0(x)+ \f^i(x)q_i\,
\end{equation}
(no higher order terms).
Then from~\eqref{eq.y},
\begin{equation*}
    y^i=\f^i(x)\,,
\end{equation*}
so the coefficients $\f^i$ define a smooth map $\f\co M_1\to M_2$. From~\eqref{eq.f} we obtain
\begin{equation*}
    f(x)=S_0(x)+ \f^i(x)q_i -y^iq_i+g(y)=S_0(x) + g(\f(x))\,.
\end{equation*}
We see that in this case, the map $\Phi^*$ is the combination of an ordinary  pullback w.r.t. a map $M_1\to M_2$ and a  `constant shift'  by a  function on $M_1$,
\begin{equation}\label{eq.linearappr}
    \Phi^*[g]= S_0+ \f^*g\,.
\end{equation}
\end{example}
In contrast with~\eqref{eq.sexpandedlinear},  it is not possible in a meaningful way to consider the generating function $S$ with a finite number of higher order terms  restricting  their order by some  $r>1$ (as we shall see later). 
So we have to deal with the whole expansion~\eqref{eq.sexpanded}. For that, we  consider $\Phi^*$ perturbatively near $g\equiv 0$.

\begin{example}[linear approximation for $\Phi^*$] \label{ex.linear}
Consider $g(y)=\e h(y)$ where $\e^2=0$. From~\eqref{eq.f} and \eqref{eq.q}, we see that we need to determine $y$ from~\eqref{eq.y} only in the zero order approximation. Also, there is no input  from the terms of order $\geq 2$ in ~\eqref{eq.sexpanded}. Hence $y^i=\f^i(x)$ and $f(x)$ is obtained similarly to Example~\ref{ex.pullbackshift}:
\begin{equation*}
    f(x)=S_0(x)+ \f^i(x)q_i -y^iq_i+\e h(y)=S_0(x) + \e h(\f(x))\,.
\end{equation*}
In other words,
\begin{equation}
    \Phi^*[\e h]= S_0 + \e \, \f^*h \mod \e^2\,.
\end{equation}
\end{example}

\begin{example}[quadratic approximation for $\Phi^*$] \label{ex.quadraticphi}To make one more step, let $g(y)=\e h(y)$ where $\e^3=0$. Now we see that we need $y$ in the linear approximation. Writing $y_{\e}=y_0+\e y_1 \mod \e^2$, we obtain from~\eqref{eq.y} and \eqref{eq.sexpanded}
\begin{equation*}
    y^i_{\e}= \f^i(x) +\e\,\new{S^{ij}}(x) \der{h}{y^j}\,(y_{0}) = \f^i(x) +\e\,\new{S^{ij}}(x) \der{h}{y^j}\,(\f(x)) \mod \e^2\,.
\end{equation*}
To find $f(x)$, we substitute into~\eqref{eq.f} and simplify:
\begin{multline*}
    f(x)= S(x,q)-y^iq_i +\e h(y)= \\
    S\left(x, \e \der{h}{y}\,(y_0 +\e y_1)\right) - \e (y^i_0+\e y^i_1)\der{h}{y^i}\,(y_0+\e y_1) +
    \e h(y_0+\e y_1)= \\
    S_0(x) + \e\, \f^i(x) \der{h}{y^i}\,(y_0 +\e y_1) + 
    \e^2 \,\frac{1}{2}\,\new{S^{ij}}(x)\der{h}{y^j}\,(y_0)\der{h}{y^i}\,(y_0)
    -\\
    \e (y^i_0+\e y^i_1)\der{h}{y^i}\,(y_0+\e y_1) +\e h(y_0+\e y_1)=\\
    S_0(x) +     \e^2 \,\frac{1}{2}\,\new{S^{ij}}(x)\der{h}{y^j}\,(y_0)\der{h}{y^i}\,(y_0)
    - \e^2\, y^i_1\der{h}{y^i}\,(y_0+\e y_1) +\e h(y_0+\e y_1)=
    \end{multline*}
    \begin{multline*}
    S_0(x) +  \e^2 \,\frac{1}{2}\,\new{S^{ij}}(x)\der{h}{y^j}\,(y_0)\der{h}{y^i}\,(y_0)
    - \e^2\, y^i_1\der{h}{y^i}\,(y_0) +\e\, h(y_0) +\e^2\, y_1^i\der{h}{y^i}\,(y_0) =\\
    S_0(x) + \e\, h(\f(x)) +  \e^2 \,\frac{1}{2}\,\new{S^{ij}}(x)\der{h}{y^j}\,(\f(x))\der{h}{y^i}\,(\f(x)) \mod \e^3\,.
\end{multline*}
Thus
\begin{equation}
    \Phi^*[\e h]= S_0 + \e \, \f^*h + \e^2 \,\frac{1}{2}\,\new{S^{ij}}\;\f^*\p_jh\,\f^*\p_ih   \mod \e^3\,.
\end{equation}
\end{example}

Generalizing from these examples, we  claim that in general the nonlinear transformation $\Phi^*$ \emph{exists at least at the formal level} as a perturbation series around an ordinary pullback  plus a  shift (a transformation of the form~\eqref{eq.linearappr})\,:
\begin{equation}\label{eq.phiexpanded}
    \Phi^*[g](x)= S_0(x) +  \f^*g(x) + \sum_{r\geq 2} \Phi_r\bigl(x,\f^*\p g(x),\f^*\p^2g(x),\ldots \bigr)\,,
\end{equation}
where each term  $\Phi_r$ is a homogeneous differential polynomial  in $g$  of order $\leq r$. (Also, $\Phi_r$ depends on derivatives  of degrees $\leq r$ in $g$.) Here the `shift'  $S_0(x)$ is given by the zero order term of a generating function $S(x,q)$ and the ordinary pullback $\f^*$ is with respect to a map $\f\co M_1\to M_2$, which is given the    first order terms  of the  function $S(x,q)$.

That this is indeed so  can be proved as follows. For a given $g$, equation~\eqref{eq.y} defines the variables $y^i$ as functions of $x^a$, i.e., it defines a smooth map 
\begin{equation}\label{eq.phig}
    \f_g\co M_1\to M_2\, 
\end{equation}
depending on $g\in \funn(M_2)$, which is a formal perturbation of  a given map $\f=\f_0\co M_1\to M_2$.
Solution of~\eqref{eq.y}  may be obtained by    an iterative procedure starting from $y^i=\f^i(x)=(-1)^{\itt}\lder{S}{q_i}(x,0)$. In other words, we write
\begin{equation*}
    \f_g=\f_0+\ldots \ \,,
\end{equation*}
where $\f_0=\f$, so the map $\f_g$ depending on $g$ is expressed as a perturbation series around the map $\f$ defined by the canonical relation $\Phi$ alone. Introduce parameter $\e$ and consider $\e g$ instead of $g$.  For the $N$th iterative step we write
\begin{equation*}
    y_{(N)}=y_0+\e\,y_1+\e^2y_2 +\ldots + \e^Ny_N \quad \mod \e^{N+1}\,,
\end{equation*}
where $y_0=\f(x)$. The term $y_N$ is defined via $y_0, y_1, \ldots, y_{N-1}$ from the equation
\begin{multline}\label{eq.iteration}
    y_0^i+\e\,y_1^i+\e^2y_2^i +\ldots + \e^Ny_N^i =\\
    \f^i(x) +\e\,S^i_1\left(x, \der{g}{y}\left(y_0 +\e\,y_1 +\e^2y_2  +\ldots + \e^{N-1}y_{N-1}\right)\right)  \\
    \shoveright{+\e^2S^i_2\left(x, \der{g}{y}\left(y_0 +\e\,y_1 +\e^2y_2  +\ldots + \e^{N-2}y_{N-2}\right)\right)+ \ldots }  \\
   + \e^N S^i_N\left(x, \der{g}{y}\,(y_0)\right) \quad \mod \e^{N+1}\,.
\end{multline}
Here
\begin{equation*}
    S^i_r(x,q)=(-1)^{\itt}\,\der{S_{r+1}}{q_i}\,(x,q)=\frac{1}{r!}\,\new{S^{ii_1\ldots i_{r}}}(x)q_{i_r}\ldots q_{i_1}\,,
\end{equation*}
where
\begin{equation*}
    S_r(x,q)= \frac{1}{r!}\,\new{S^{i_1\ldots i_{r}}}(x)q_{i_r}\ldots q_{i_1}\,
\end{equation*}
is the $r$th homogeneous term in the expansion~\eqref{eq.sexpanded}. To find $y_N$ from~\eqref{eq.iteration}, we expand the r.h.s. to order $N$ in $\e$ and notice that the terms of order  $\leq N-1$ cancel automatically with the terms at the l.h.s. (since $y_1$, \ldots, $y_{N-1}$ were defined by exactly the same relation at the previous steps). Therefore, it is sufficient to collect the terms of order $N$ at the r.h.s. and upon division by $\e^N$ this gives $y_N$. By the way we see that $y_N$ depends on the derivatives of $g$ to order $\leq N$ (evaluated at $y_0=\f(x)$). We may write symbolically
\begin{equation}\label{eq.phigexpand}
    \f_g= \f + \f_1[g]+ \f_2[g] +\ldots + \f_N[g]+\ldots
\end{equation}
as a functional formal power series in $g$, where each term $\f_N[g]$ is of order $N$ in $g$ and $\f_N^i[g]=y^i_N$   defined by the above procedure. Hence
\begin{equation}\label{eq.fsymb}
    \Phi^*[g] = \f_g^*g+  S\!\left(\!x,\f_g^*\der{g}{y}\right) - \f_g^i\cdot\f_g^*\p_i{g}  \,,
\end{equation}
which justifies the claim about the form of the expansion~\eqref{eq.phiexpanded}.

To summarize, we can now supplement Theorem-Definition~\ref{thm.one} by saying that we should consider the generating function $S(x,q)$ specifying a canonical relation $\F$ as a formal power series in $q_i$; thus the relation  $\F$ is itself formal. The operation $\F^*$ is defined by~\eqref{eq.fsymb} via the above iterative procedure and is therefore a \emph{formal} mapping between the spaces of functions.
Also, the generating function $S(x,q)$ is regarded as a part of   structure, which eliminates questions about a choice of `constants of integration'. This is the first of the clarifications promised in the proof of Theorem-Definition~\ref{thm.one}. Another clarification   concerns the (in)dependence of the procedure defining  $\F^*$ of a choice of coordinates and will follow shortly.

In Example~\ref{ex.linear}, we actually computed the derivative of the mapping $\Phi^*$ at $g\equiv 0$. It is possible to find the derivative of $\Phi^*$ at an arbitrary point.

\begin{theorem} \label{thm.deriv}
The derivative of the formal mapping of functional manifolds
\begin{equation*}
    \Phi^*\co \funn(M_2)\to \funn(M_1)\,
\end{equation*}
at a point $g\in \funn(M_2)$ is given by the formula:
\begin{equation*}
    (T\Phi^*)[g]=\f_g^*\,,
\end{equation*}
where
\begin{equation*}
    \f_g^*\co \fun(M_2)\to \fun(M_1)
\end{equation*}
is the usual pullback with respect to the   map $\f_g\co M_1\to M_2$    defined by $g$.
\end{theorem}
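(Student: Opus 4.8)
The plan is to compute the derivative directly from the closed formula~\eqref{eq.fsymb} for $\Phi^*[g]$, differentiating with respect to $g$ in the direction of a test function $h\in\fun(M_2)$. Write $g_t = g+th$ and set $f_t = \Phi^*[g+th]$; the goal is to show $\oder{}{t}\big|_{t=0} f_t = \f_g^* h$. The function $f_t$ is given by $f_t(x) = S(x,q_t) - y_t^i q_{t,i} + g_t(y_t)$, where $y_t = \f_{g_t}(x)$ and $q_{t,i} = \f_{g_t}^*\p_i g_t = \der{g_t}{y^i}(y_t)$, with $y_t$ solving~\eqref{eq.y}. First I would record the two auxiliary derivatives $\dot y^i := \oder{y_t^i}{t}|_{t=0}$ and $\dot q_i := \oder{q_{t,i}}{t}|_{t=0}$; the latter expands by the chain rule as $\dot q_i = \der{h}{y^i}(y) + \dder{g}{y^i}{y^j}(y)\,\dot y^j$, where $y = \f_g(x)$.

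Next I would differentiate $f_t$ term by term at $t=0$, treating $x$ as fixed:
\begin{equation*}
    \dot f(x) = \der{S}{q_i}(x,q)\,\dot q_i - \dot y^i q_i - y^i \dot q_i + \der{g}{y^i}(y)\,\dot y^i + h(y)\,.
\end{equation*}
The key observations are the two defining relations on $\L_{f_t}$: from~\eqref{eq.q} we have $q_i = \der{g}{y^i}(y)$, so the $-\dot y^i q_i$ and $+\der{g}{y^i}(y)\dot y^i$ terms cancel exactly; and from~\eqref{eq.y} with the sign bookkeeping of~\eqref{eq.relr2} we have $(-1)^{\itt}\der{S}{q_i}(x,q) = y^i$, so $\der{S}{q_i}(x,q)\,\dot q_i - y^i\dot q_i$ also cancels (up to the parity sign, which must be tracked carefully). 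Everything collapses and leaves $\dot f(x) = h(y) = h(\f_g(x)) = (\f_g^* h)(x)$, which is the claim. I would note that this cancellation is the derivative-level shadow of the computation in the proof of Theorem-Definition~\ref{thm.one} showing $df = p_a dx^a$ on $\L$.

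The main obstacle — and the only place requiring genuine care — is the rigorous justification that $y_t = \f_{g_t}(x)$ is differentiable in $t$ and that term-by-term differentiation of the formal power series~\eqref{eq.phigexpand} is legitimate. Since we are working at the formal level (the expansions~\eqref{eq.phiexpanded} and~\eqref{eq.phigexpand} are formal power series in $g$), the cleanest route is to argue order by order: substitute $g \mapsto g + \e h$ with $\e^2 = 0$ into the iterative scheme~\eqref{eq.iteration} and track the $\e$-linear part, exactly as in Example~\ref{ex.linear} but centered at $g$ rather than at $0$. The implicit function theorem applied to~\eqref{eq.y} (whose linearization in $y$ is invertible since it is a deformation of the identity $y^i = \f^i(x)$) guarantees that $\f_{g_t}$ depends smoothly on $t$, so the formal computation has analytic content whenever $S$ is genuinely convergent. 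A secondary point is the consistent handling of the parity signs $(-1)^{\itt}$ throughout — these are exactly the signs already appearing in~\eqref{eq.relr2} and in the proof of Theorem-Definition~\ref{thm.one}, so reusing that bookkeeping verbatim should suffice. Beyond this, the argument is the short cancellation displayed above.
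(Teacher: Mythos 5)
Your proposal is correct and follows essentially the same route as the paper: the paper perturbs $g$ by a nilpotent parameter, $g_{\e}=g+\e u$, substitutes $y^i_{\e}=y^i+\e y^i_1$, $q^{\e}_i=q_i+\e q_{1i}$ into~\eqref{eq.f}, and observes exactly the two cancellations you describe (driven by $q_i=\lder{g}{y^i}$ and $y^i=(-1)^{\itt}\lder{S}{q_i}$), leaving $f_{\e}=f+\e\,u(\f_g(x))$. Your only departure is cosmetic --- a $t$-derivative in place of the nilpotent $\e$, plus some added remarks on formal-series justification that the paper does not spell out --- so no further comparison is needed.
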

\begin{proof} Consider a variation of a point $g\in \funn(M_2)$, $g_{\e}(y)=g(y)+\e u(y)$. We need to find the corresponding $f_{\e}=\Phi^*[g_{\e}]$. We shall denote by $q_i^{\e}$ and $y^i_{\e}$ the solutions of~\eqref{eq.q} and  ~\eqref{eq.y} for the perturbed function $g_{\e}$, and by symbols without $\e$, the corresponding non-perturbed objects. We may write $y^i_{\e}=y^i+\e y^i_1$ and $q_i^{\e}=q_i+\e q_{1i}$\,. By substituting into~\eqref{eq.f}, we obtain
\begin{multline*}
    f_{\e}(x)=S(x,q+\e q_1) -(y^i+\e y^i_1)(q_i+\e q_{1i})+ g_{\e}(y+\e y_1)=\\
    S(x,q) +  \e\, q_{1i} \der{S}{q_i}\,(x,q)
    -y^i q_i
    -(-1)^{\tilde\e\itt}\e y^iq_{1i}
    -\e \,y^i_1 q_i
    +
    g(y)
    +\e y_1^i \der{g}{y^i}\,(y)
+    \e\,u(y)=\\
S(x,q) +  \e\, q_{1i} (-1)^{\itt}y^i
    -y^i q_i
    -(-1)^{\tilde\e\itt}\e (-1)^{\itt(\itt+\tilde\e)}q_{1i}y^i
    -\e \,y^i_1 q_i
    +
    g(y)
    +\e y_1^i q_i
+    \e\,u(y)=\\
S(x,q)  -y^i q_i +
    g(y) +    \e\,u(y)=
    f(x) +    \e\,u(y)\,.
\end{multline*}
Note that $y=\f_g(x)$\,. Therefore for a perturbation $\e\,u$, $u\in\fun(M_2)$, of   $g\in\funn(M_2)$, the corresponding perturbation of $\Phi^*[g]\in\funn(M_1)$ is $\e\,\f_g^*u$, where $\f_g^*u\in\fun(M_1)$\,.
\end{proof}

Finally, let us turn to the question of a transformation law of the generating function $S(x,q)$ under a change of coordinates. Geometrically, we have a Lagrangian submanifold   $\Phi\subset T^*M_1\times (-T^*M_2)$, which in given coordinates on $M_1$ and $M_2$ is described by~\eqref{eq.relr2} where $S=S(x,q)$ is a function of the variables  $x^a$ and $q_i$ (the coordinates on the base of $T^*M_1$ and the standard fiber of $T^*M_2$, respectively). Suppose we change coordinates:
\begin{equation}\label{eq.coordchange}
    x^a=x^a(x')\,, \ p_a=\der{x^{a'}}{x^a}\,p_{a'}\,, \ y^i=y^i(y')\,, \ q_i=\der{y^{i'}}{y^i}\,q_{i'}\,.
\end{equation}
We need to find a new function $S'=S'(x',q')$ of the   variables $x^{a'}, q_{i'}$ such that in the new coordinates $x^{a'}, p_{a'}, y^{i'}, q_{i'}$ on $T^*M_1\times (-T^*M_2)$ our Lagrangian submanifold   $\Phi$ is specified by the  equations of the same form:
\begin{equation}\label{eq.neweqphi}
    \ p_{a'}=\der{S'}{x^{a'}}\,(x',q')\,, \ y^{i'}=(-1)^{\itt'}\der{S'}{q_{i'}}\,(x',q')\,.
\end{equation}
Since local coordinates on $M_1$ and $M_2$ transform independently, the questions concerning the behavior  of $S$ w.r.t. transformations of $x^a$ and $y^i$ are separate. The behavior w.r.t.  $x^a$ is not problematic: it is easy to see that w.r.t. these variables $S$ can be viewed as representing a genuine function on $M_1$, and so one has to simply perform a substitution in the arguments, $x^i=x^i(x')$. The real problem is with   transformations of coordinates  on $M_2$. The solution is given by the following  statement. (Note that   generating functions are generally defined up to   constants, but we shall give a transformation law for $S$ without such an ambiguity.)

\begin{theorem} \label{thm.translaws}
The `new' generating function $S'(x',q')$ is given the formula
\begin{equation}\label{eq.news}
    S'(x',q')=S(x,q) - y^iq_i +y^{i'}q_{i'}\,,
\end{equation}
where $x^a$,  $q_i$, $y^i$ and $y^{i'}$   are determined from the equations
\begin{equation} \label{eq.newsadd}
   y^{i'}=y^{i'}(y)\,, \quad y^i=(-1)^{\itt}\der{S}{q_i}(x,q)\,, \quad q_i=\der{y^{i'}}{y^i}(y)\,q_{i'}\,, \quad x^a=x^a(x')\,.
\end{equation}
\end{theorem}
\begin{proof} Differentiate both sides of~\eqref{eq.news}\,:
\begin{multline*}
    dS'= dS -dy^iq_i-(-1)^{\itt}y^idq_i +dy^{i'}q_{i'}+(-1)^{\itt'}y^{i'}dq_{i'} \\
    =dS  -(-1)^{\itt}y^idq_i  +(-1)^{\itt'}y^{i'}dq_{i'}=  
    dx^a\,\der{S}{x^a}+dq_i\,\der{S}{q_i}- (-1)^{\itt}dq_iy^i  +(-1)^{\itt'}dq_{i'}y^{i'}\,,
\end{multline*}
and on the submanifold $\Phi$,
\begin{equation*}
    dS'=
    dx^a\,p_a+(-1)^{\itt}dq_i\,y^i- (-1)^{\itt}dq_iy^i  +(-1)^{\itt'}dq_{i'}y^{i'}=
    dx^{a'}\,p_{a'}+(-1)^{\itt'}dq_{i'}y^{i'}\,,
\end{equation*}
which gives~\eqref{eq.neweqphi} as desired. To properly make use of formula~\eqref{eq.news}, one has to express all the variables at the r.h.s. of it, i.e., $x^a$, $q_i$, $y^i$ and $y^{i'}$ in terms of the variables at the l.h.s., i.e., $x^{a'}$ and $q_{i'}$\,. For $x^a$, we simply substitute $x^a=x^a(x')$. We also substitute $y^{i'}=y^{i'}(y)$ and use the standard transformation law for the momentum variables $q_i$, expressing them via $q_{i'}$ and $y^i$.  The rest is subtler: for determining $y^i$  we have a system of coupled equations
\begin{equation*}
    y^i=(-1)^{\itt}\der{S}{q_i}(x,q)\,, \quad q_i=\der{y^{i'}}{y^i}(y)\,q_{i'}\,,
\end{equation*}
which gives 
\begin{equation*}
    y^i=(-1)^{\itt}\der{S}{q_i}\bigl(x,\der{y'}{y}(y)q'\bigr)\,,  
\end{equation*}
from where $y$ is expressed as a function of $x$ and $q'$ by an iterative procedure similar to that defining the map $\f_g$ above. The result is a formal power expansion in $q'$.
\end{proof}

Formula~\eqref{eq.news} can be read as the composition of   three transformations: the `direct' Legendre transform from $q_i$ to $y^i$, the substitution $y^i=y^i(y')$, and the `inverse' Legendre transform from $y^{i'}$ to $q_{i'}$.\footnote{Note an analogy with pseudodifferential operators: the direct Fourier transform, then a multiplication operator, and then the inverse Fourier transform. It is not a random analogy because  the Legendre transform can be seen as the `classical limit' of the Fourier transform. We can treat formula~\eqref{eq.f} in a similar way. Compare with~\cite{tv:oscil, tv:qumicro}.} Namely, we pass from $S(q)$ (here the dependence on $x$ is suppressed) to $S^*(y)$,
\begin{equation*}
    S^*(y)= y^iq_i- S(q)\,,
\end{equation*}
where $q_i$ is expressed from
\begin{equation*}
    y^i=(-1)^{\itt}\der{S}{q_i}\,(q)\,.
\end{equation*}
Then we substitute to obtain ${S^*}'(y'):=S^*\bigl(y(y')\bigr)$. Finally, we pass from ${S^*}'$ to $S'= {({S^*}')}^*$,
\begin{equation*}
    S'(q')= y^{i'}q_{i'}- S^*\bigl(y(y')\bigr)\,,
\end{equation*}
where $y^{i'}$ is expressed from
\begin{equation*}
    q_{i'}= \der{}{y^{i'}}\,S^*\bigl(y(y')\bigr)\,.
\end{equation*}
Assembled together, these steps give  equation~\eqref{eq.news}.
The possibility to make the Legendre transform from $q$ to $y$   puts a restriction on the generating function $S$ (non-degeneracy in $q$). Such a restriction is not satisfied, for example, by $S$ corresponding to a smooth map $M_1\to M_2$.  However, the restriction disappears for the  composite transformation $S(q)\mapsto S'(q')$, because the two Legendre transforms compensate each other in a way. (It is illuminating to see how the inverse matrix $S_{ij}$  eventually disappears from the final answer after one initially  assumes the non-degeneracy of the quadratic form $\new{S^{ij}}q_jq_i$ in the expansion of $S$ so to be able to apply the   Legendre transform.)  Theorem~\ref{thm.translaws} does not require any  non-degeneracy from $S$.

\begin{remark} The transformation law for $S$ given by~\eqref{eq.news} and~\eqref{eq.newsadd} satisfies the \emph{cocycle condition}, as one can immediately see: if $S'(x',q')$ is expressed from $S(x,q)$ by~\eqref{eq.news},\eqref{eq.newsadd}, and $S''(x'',q'')$ is expressed from $S'(x',q')$ by the same  formulas  (with the necessary replacements), then the composite expression of $S''(x'',q'')$ via $S(x,q)$ coincides with the direct expression given by these formulas. This makes it possible to consider generating functions $S(x,q)$ (defined as power series in $q$) as geometric objects on $M_1\times M_2$.
\end{remark}

\begin{example} Suppose the `old' generating function $S(x,q)$ is given by the expansion~\eqref{eq.sexpanded}. Under a change of coordinates~\eqref{eq.coordchange}, the `new' generating function $S'(x',q')$ has the expansion
\begin{equation}\label{eq.expannews}
    S'(x',q')=S_0\bigl(x(x')\bigr) + \f^{i'}(x')\,q_{i'} +   \frac{1}{2}\,\new{S^{i'\!j'}}(x')\,q_{j'}q_{i'}+O(|q'|^3) ,
\end{equation}
where
\begin{equation}\label{eq.newphii}
    \f^{i'}(x')=y^{i'}\bigl(\f(x(x'))\bigr)\,,
\end{equation}
and
\begin{equation}\label{eq.newphiij}
    \new{S^{i'\!j'}}(x')= (-1)^{\itt(\itt'+1)}\der{y^{i'}}{y^i}\,\bigl(\f'(x')\bigr)\,\new{S^{ij}}\bigl(x(x')\bigr)\,  \der{y^{j'}}{y^j}\,\bigl(\f'(x')\bigr)\,.
\end{equation}
This can be obtained by a patient calculation along the lines above, which we leave to the pleasure of the reader. 
Note that~\eqref{eq.newphii} is just the expression of the map $\f\co M_1\to M_2$ in  new coordinates on $M_1$ and $M_2$, and in equation~\eqref{eq.newphiij} one recognizes the tensor law on $M_2$ at a point $\f(x)$. Non-tensor transformations  depending,  in particular,  on higher derivatives of a coordinate transformation on $M_2$  appear in the higher order terms of $S$.
\end{example}

The following statement is a direct consequence of the definition of $\F^*$ and the transformation law given by Theorem~\ref{thm.translaws}. It deserves the name of a theorem because of its importance.

\begin{theorem} 
Suppose $g'=g'(y')$ is the expression of an even function $g=g(y)$ in new coordinates on $M_2$, i.e., $g'(y')=g(y(y'))$, and  $S'(x',q')$ is the expression of a generating function $S(x,q)$ in new coordinates on $M_1$ and  $M_2$ according to the transformation law~\eqref{eq.news},\eqref{eq.newsadd}.  Let the function  $f'=f'(x')$ be obtained from $g'$, $S'$  and the function $f=f(x)$ be obtained from $g$, $S$,  as  the generalized pullbacks  (in coordinates $x'$, $y'$ and $x$, $y$, respectively). Then  the function $f'=f'(x')$ is the expression of the function $f=f(x)$ in the new coordinates on $M_1$, i.e., $f'(x')=f(x(x'))$.
\end{theorem}
\begin{proof} We are given that
\begin{equation*}
    f'(x')= g'(y') + S'(x',q')-y'q'\,,
\end{equation*}
where
\begin{equation*}
    y^{i'}=(-1)^{i'}\der{S'}{q_{i'}}(x',q')\,, \quad q_{i'}=\der{g'}{y'}(y')\,.
\end{equation*}
Also $g'(y')=g(y(y'))$, for an invertible change of variables $y'=y'(y)$, and
\begin{equation*}
    S'(x',q')=S(x,q)-yq+y'q'\,,
\end{equation*}
where
\begin{equation*}  
   y^{i'}=y^{i'}(y)\,, \quad y^i=(-1)^{\itt}\der{S}{q_i}(x,q)\,, \quad q_i=\der{y^{i'}}{y^i}(y)\,q_{i'}\,, \quad x^a=x^a(x')\,.
\end{equation*}
Note that this transformation law for $S$ implies $y^{i'}=(-1)^{i'}\lder{S'}{q_{i'}}(x',q')$ if $y^{i}=(-1)^{i}\lder{S}{q_{i}}(x,q)$. Hence we can `compose' the formulas for $f'$ and $S'$ to obtain
\begin{equation*}
   f'(x')=  g(y(y'))+S(x,q)-yq+y'q'-y'q'=g(y)+S(x,q)-yq\,,
\end{equation*}
where at the r.h.s.
\begin{equation*}
    y^{i}=(-1)^{i}\der{S}{q_{i}}(x,q)\,, \quad q_{i}=\der{g}{y}(y)\,, 
\end{equation*}
and $x=x(x')$. This is exactly the equality   $f'(x')=f(x(x'))$, as claimed.
\end{proof}

To summarize, we may say that a generalized pullback  $\F^*$, a formal mapping of function spaces  defined  initially in local coordinates, \emph{is independent of a choice of coordinates}. This finishes with all questions of substantiation.

We leave out    discussion of compositions of formal canonical relations $\F$ given by formal generating functions $S(x,q)$. One should expect, in view of the analysis performed above, that they form what can be regarded as a  \emph{formal category}\footnote{The underlying category, for which this formal category is a formal neighborhood, being the semi-direct product of the usual category of smooth supermanifolds and their smooth maps with algebras of smooth functions.} and the usual formula
 \begin{equation} \label{eq.composit}
    (\Phi_1\circ \Phi_2)^*=\Phi_2^*\circ \Phi_1^*
 \end{equation}
holds, so  our analog of pullbacks gives a nonlinear representation of (the dual of) this formal category. (Formula~\eqref{eq.composit} should basically follow from the associativity of composition of relations.) These questions are considered fully in our forthcoming work~\cite{tv:microformal}.

\begin{remark} We have worked so far with formal objects (power series). To extend consideration to non-formal objects  may be possible but may require more work. Our central equation~\eqref{eq.y}   defines a map $\f_g\co M_1\to M_2$ associated with a canonical relation $\Phi$ and a function $g$. We showed above how to solve it by iterations so to obtain a power series solution. At the same time, one can imagine that a Banach contraction mapping argument can be used for obtaining a non-formal solution of~\eqref{eq.y} in a neighborhood of the zero section.  Since such a neighborhood is   unspecified, a neat formulation would be to replace it by a \emph{germ}. So the options are to work on a formal level with power series (infinite jets) or   with germs.  Considering germs of symplectic manifolds at Lagrangian submanifolds is Weinstein's idea dating back to~\cite{weinstein:sympl71}. If we follow this direction, our work will immediately meet the
recent body of works on ``symplectic microgeometry''  such as~\cite{weinstein:symplectic-geometries}, \cite{cattaneo-dherin-weinstein:one, cattaneo-dherin-weinstein:two, cattaneo-dherin-weinstein:three}. 
\end{remark}

\section{Hamilton--Jacobi vector fields}

Consider a Hamiltonian function $H\in\fun(T^*M)$, which can be even or odd. We write $H=H(x,p)$, as usual. To such a function we assign a   vector field $\X_H$ on the infinite-dimensional manifold  $\funn(M)$, as follows: for each $f\in \funn(M)$, the variation of $f$ is given by
\begin{equation}\label{eq.xh}
    f\mapsto f_{\e}=f  +\e \X_H[f]\,, \ \text{where} \  f_{\e}(x)=f(x)+ \e H\Bigl(x,\der{f}{x}\,(x)\Bigr)\,.
\end{equation}
Here $\e^2=0$ and $\tilde\e=\Ht$. The parity of the vector field $\X_H$ is the same as the parity of $H$.
In standard terminology used in field theory or integrable systems, the vector field $\X_H$ is a `first-order local vector field' on the space of functions. It can be written in terms of variational derivatives as
\begin{equation}\label{eq.xh1}
    \X_H = \new{(-1)^{\Ht m}\int\limits_{M^{n|m}}}\!\!  Dx\; H\Bigl(x,\der{f}{x}\,(x)\Bigr)\var{}{f(x)}\,.
\end{equation}
\new{(The sign   is required for   linearity.)}

The differential equation defining the flow of the vector field $\X_H$ on the manifold $\funn(M)$ is   a  Hamilton--Jacobi equation.\footnote{From arbitrary first order local vector fields, the vector fields $\X_H$ are distinguished by the  dependence only on the values of the  derivative but not   the function itself. This is precisely what distinguishes the Hamilton--Jacobi equations among arbitrary first order partial differential equations.} It takes the familiar form
\begin{equation}\label{eq.hamjacev}
    \der{f}{t}= H\Bigl(x,\der{f}{x}\Bigr)
\end{equation}
when $H$ is even. Here   the time variable $t$ in~\eqref{eq.hamjacev} is also even. (In~\eqref{eq.hamjacev}, a function $f$ depends on $t$ in addition to   $x$, so to give a curve in  $\funn(M)$.) For an odd $H$, the corresponding  Hamilton--Jacobi equation takes the form
\begin{equation}\label{eq.hamjacod}
    Df\equiv\left(\der{}{\tau}+\tau \der{}{t}\right)f= H\Bigl(x,\der{f}{x}\Bigr)\,,
\end{equation}
with  two time variables, even $t$ and odd $\tau$. (The operator $D$ at the l.h.s. of~\eqref{eq.hamjacod} squares to   $\lder{}{t}$.) 

\begin{theorem} \label{prop.commutofxh}
\label{thm.commut}
For arbitrary Hamiltonians $H$ and $F$,
\begin{equation}\label{eq.commutofxh}
    \left[\X_H,\X_F\right]=-\X_{\left(H,F\right)}\,,
\end{equation}
where the bracket at the l.h.s. is the commutator of vector fields on the infinite-dimensional manifold $\funn(M)$ and the bracket at the r.h.s. is the canonical Poisson bracket on $T^*M$.
\end{theorem}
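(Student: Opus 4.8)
The plan is to compute both sides of~\eqref{eq.commutofxh} directly as first-order local vector fields on $\funn(M)$ and compare the resulting differential operators in $H$, $F$ and their derivatives. First I would fix $f\in\funn(M)$ and apply the flow of $\X_F$ to first order in a parameter $\e$ with $\e^2=0$: by~\eqref{eq.xh} we get $f_\e(x)=f(x)+\e F(x,\p f/\p x)$, hence $\p f_\e/\p x^a = \p f/\p x^a + \e\,\p_a\bigl(F(x,\p f/\p x)\bigr) = \p f/\p x^a + \e\bigl(F_{x^a} + F_{p_b}\,\p^2 f/\p x^a\p x^b\bigr)$, where $F_{x^a}$ and $F_{p_b}$ are the partial derivatives of $F$ evaluated at $(x,\p f/\p x)$. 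Then I would apply $\X_H$ at the perturbed point, i.e. evaluate $H(x,\p f_\e/\p x)$, expand to first order in $\e$, and read off $\X_H$ at $f_\e$ minus $\X_H$ at $f$; doing the same with $H$ and $F$ interchanged and subtracting gives $[\X_H,\X_F][f]$. The key observation is that the terms involving second derivatives of $f$ will cancel in the commutator (they are symmetric under the relevant exchange), leaving exactly $H_{p_a}F_{x^a} - (-1)^{\Ht\Ft}F_{p_a}H_{x^a}$ evaluated at $(x,\p f/\p x)$, which up to sign is the canonical Poisson bracket $(H,F)(x,\p f/\p x)$; that is precisely $-\X_{(H,F)}[f]$.

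Concretely, the first-order variation of $\X_H$ along the direction $\e\,u$ (with $u\in\fun(M)$) is, by the chain rule, the operator $u\mapsto H_{p_a}(x,\p f/\p x)\,\p_a u$, i.e. the derivative of the map $f\mapsto H(x,\p f/\p x)$ is the first-order differential operator with symbol $H_{p_a}(x,\p f/\p x)$. Taking $u=\X_F[f]=F(x,\p f/\p x)$ gives $H_{p_a}(x,\p f/\p x)\,\p_a\bigl(F(x,\p f/\p x)\bigr)$, and $\p_a(F(x,\p f/\p x)) = F_{x^a} + F_{p_b}\,f_{x^ax^b}$. The antisymmetrized (graded) difference of this with the $H\leftrightarrow F$ version kills the $f_{x^ax^b}$ terms because $H_{p_a}F_{p_b}f_{x^ax^b}$ is graded-symmetric in the pair $(H,F)$ after accounting for the sign rules (this is where one must be careful with the $\Z$-grading and the conventions of~\eqref{eq.xh}, in particular the placement of $\tilde\e=\Ht$). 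What survives is $H_{p_a}F_{x^a}-(-1)^{\Ht\Ft}F_{p_a}H_{x^a}$, which is the canonical Poisson bracket $(H,F)$ on $T^*M$ (in the sign convention of the paper, with an overall minus matching the statement).

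I would present this cleanly by first recording, as a small lemma or inline remark, the formula for $T\X_H$ (that its derivative at $f$ sends $u$ to $H_{p_a}(x,\p f/\p x)\,\p_a u$), which also connects nicely to Theorem~\ref{thm.deriv}; then the commutator is just $T\X_H\circ\X_F - (-1)^{\Ht\Ft}T\X_F\circ\X_H$ evaluated at $f$, and the computation above finishes it. The main obstacle, and the only place real care is needed, is the bookkeeping of Koszul signs: verifying that the second-derivative-of-$f$ terms genuinely cancel for Hamiltonians of arbitrary parity, and that the surviving expression matches the canonical Poisson bracket with exactly the sign $-1$ as in~\eqref{eq.commutofxh} rather than $+1$. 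This is best handled by writing everything with explicit parities $\Ht$, $\Ft$, $\tilde\e$ and tracking the order in which $\e$, the derivatives $\p_a$, and the momenta $p_a$ are moved past one another, using the conventions already fixed in~\eqref{eq.xh} and~\eqref{eq.xh1}. Everything else is a routine application of the chain rule for the composite $f\mapsto \p f/\p x\mapsto H(x,\p f/\p x)$.
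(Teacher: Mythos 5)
Your proposal is correct and is essentially the paper's own argument in a lightly repackaged form: the paper computes the commutator as a four-fold composition of infinitesimal shifts, which amounts to exactly your chain-rule computation of $T\X_H$ applied to $\X_F[f]$ (and vice versa), with the Hessian terms $\dder{f}{x^a}{x^b}\,\der{H}{p_b}\,\der{F}{p_a}$ cancelling by symmetry and the remaining terms assembling into $-(H,F)$ evaluated at $(x,\lder{f}{x})$. The only point to watch is the order in your formula $T\X_H\circ\X_F - (-1)^{\Ht\Ft}T\X_F\circ\X_H$ (the standard convention gives the derivative of the \emph{second} field's component along the first), but this affects only the overall sign, which the paper itself notes is convention-dependent.
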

(The minus sign in~\eqref{eq.commutofxh} is of course completely inessential and depends   on  conventions.)
\begin{proof} Direct calculation, but   still worth giving here.  For calculating the commutator, we start from a point $f_0\in \funn(M)$ and apply to it successively infinitesimal shifts along the vector fields $\X_H$ and $\X_F$. First we arrive at $f_1$, where
\begin{equation*}
    f_1(x)=f_0(x)+\e H\Bigl(x,\der{f_0}{x}\Bigr)\,
\end{equation*}
and $\e^2=0$. Then we arrive at $f_2$, where
\begin{multline*}
    f_2(x)=f_1(x) +\h F\Bigl(x,\der{f_1}{x}\Bigr)=\\
    f_0(x)+\e H\Bigl(x,\der{f_0}{x}\Bigr) + \h F\left(x,\der{f_0}{x} +\der{}{x}\e H\Bigl(x,\der{f_0}{x}\Bigr)\right)=\\
    f_0(x)+\e H\Bigl(x,\der{f_0}{x}\Bigr) + \h F\Bigl(x,\der{f_0}{x}\Bigr)  +\h\der{}{x^a}\e H\Bigl(x,\der{f_0}{x}\Bigr)\cdot \der{F}{p_a}\Bigl(x,\der{f_0}{x}\Bigr)\,
\end{multline*}
and $\h^2=0$.
Next we arrive at $f_3$, where
\begin{multline*}
    f_3(x)=f_2(x) -\e  H\Bigl(x,\der{f_2}{x}\Bigr)=\\
    f_0(x)+\e H\Bigl(x,\der{f_0}{x}\Bigr) + \h F\Bigl(x,\der{f_0}{x}\Bigr)  +\h\der{}{x^a}\e H\Bigl(x,\der{f_0}{x}\Bigr)\cdot \der{F}{p_a}\Bigl(x,\der{f_0}{x}\Bigr)
    -\\
    \e  H\Bigl(x,\der{f_0}{x}  + \der{}{x}\h F\Bigl(x,\der{f_0}{x}\Bigr)\Bigr)=\\
    f_0(x)+\e H\Bigl(x,\der{f_0}{x}\Bigr) + \h F\Bigl(x,\der{f_0}{x}\Bigr)  +\h\der{}{x^a}\e H\Bigl(x,\der{f_0}{x}\Bigr)\cdot \der{F}{p_a}\Bigl(x,\der{f_0}{x}\Bigr)
    -\\
    \e  H\Bigl(x,\der{f_0}{x}\Bigr) -\e \der{}{x^a}\h F\Bigl(x,\der{f_0}{x}\Bigr)\cdot \der{H}{p_a}\Bigl(x,\der{f_0}{x}\Bigr)=\\
    f_0(x) + \h F\Bigl(x,\der{f_0}{x}\Bigr)  + 
    \h\der{}{x^a}\e H\Bigl(x,\der{f_0}{x}\Bigr)\cdot \der{F}{p_a}\Bigl(x,\der{f_0}{x}\Bigr)
    - \\
     \e \der{}{x^a}\h F\Bigl(x,\der{f_0}{x}\Bigr)\cdot \der{H}{p_a}\Bigl(x,\der{f_0}{x}\Bigr)\,.
\end{multline*}
Finally we arrive at $f_4$, where
\begin{multline*}
    f_4(x)=f_3(x) -\h  F\Bigl(x,\der{f_3}{x}\Bigr)= 
    f_0(x)   +\h\der{}{x^a}\e H\Bigl(x,\der{f_0}{x}\Bigr)\cdot \der{F}{p_a}\Bigl(x,\der{f_0}{x}\Bigr)
    -\\
    \e \der{}{x^a}\h F\Bigl(x,\der{f_0}{x}\Bigr)\cdot \der{H}{p_a}\Bigl(x,\der{f_0}{x}\Bigr)= 
     f_0(x)   + 
     \h\e\left((-1)^{\at\Ht}\der{}{x^a} H\Bigl(x,\der{f_0}{x}\Bigr)\cdot \right.\\
    \left. \der{F}{p_a}\Bigl(x,\der{f_0}{x}\Bigr) - (-1)^{\Ft(\at+\Ht)}  \der{}{x^a} F\Bigl(x,\der{f_0}{x}\Bigr)\cdot \der{H}{p_a}\Bigl(x,\der{f_0}{x}\Bigr)\right)\,.
\end{multline*}
By examining the differential expression in the big bracket, we observe that the terms of the first order in $f_0$ assemble to
\begin{multline*}
    (-1)^{\at\Ht}\der{H}{x^a}\, \der{F}{p_a}
    -(-1)^{\Ft(\at+\Ht)}  \der{F}{x^a}\, \der{H}{p_a}= \\
    -(-1)^{\at\Ht}\!\left(\!(-1)^{\at}\,\der{H}{p_a}\, \der{F}{x^a}- \der{H}{x^a}\, \der{F}{p_a}\right)=  
    -\left(H,F\right)\,,
\end{multline*}
the Poisson bracket of $H$ and $F$, evaluated at $\bigl(x,\der{f_0}{x}\bigr)$\,. At the same time, the terms of the second order in $f_0$ are
\begin{equation*}
    (-1)^{\at\Ht}\dder{f_0}{x^a}{x^b}\, \der{H}{p_b}\,\der{F}{p_a}
    -(-1)^{\Ft(\at+\Ht)}  \dder{f_0}{x^a}{x^b}\, \der{F}{p_b}\,\der{H}{p_a}
\end{equation*}
and we can observe that they cancel by the symmetry of second partial derivatives. Hence
\begin{equation*}
    f_4(x)= f_0-\h\e\,(H,F)\Bigl(x,\der{f_0}{x}\Bigr)\,,
\end{equation*}
as claimed.
\end{proof}

\begin{coro} Let $Q$ be an odd Hamiltonian and $(Q,Q)=-2H$, so $H$ is even. Then the solution of the Hamilton--Jacobi equation for $Q$,
\begin{equation*}
    Df= Q\Bigl(x,\der{f}{x}\Bigr)\,,
\end{equation*}
is given by $f(t,\tau)=f_0(t)+\tau f_1(t)$, where $f_0$ is the solution of the usual Hamilton--Jacobi equation for $H$,
\begin{equation*}
    \der{f_0}{t}= H\Bigl(x,\der{f_0}{x}\Bigr)\,,
\end{equation*}
and $f_1 =Q\Bigl(x,\der{f_0}{x}\Bigr)$\,. In particular, if $(Q,Q)=0$, then the Hamilton--Jacobi equation for $Q$ reduces to
\begin{equation*}
    \der{f}{\tau}= Q\Bigl(x,\der{f}{x}\Bigr)\,,
\end{equation*}
and its solution is just an `odd shift'\,:  $f= f_0+ \tau Q\Bigl(x,\der{f_0}{x}\Bigr)$\,.
\end{coro}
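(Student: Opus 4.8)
The plan is to prove the corollary by substituting the general form of a curve in $\funn(M)$ over the $(1|1)$-dimensional base into the Hamilton--Jacobi equation for $Q$ and matching $\tau$-components. Since a `point' of $\funn(M)$ is an even function, every such curve is automatically of the form $f(t,\tau)=f_0(t)+\tau f_1(t)$ with $f_0$ even and $f_1$ odd, so this is not a restriction. Using $\tau^2=0$ one gets, with $\dot{}$ denoting $\partial_t$,
\begin{equation*}
    Df=\der{f}{\tau}+\tau\der{f}{t}=f_1+\tau\,\dot f_0\,,
\end{equation*}
while Taylor expansion of the right-hand side in the momentum slot (keeping terms up to order $\tau$, with the sign conventions of the proof of Theorem~\ref{thm.commut}) gives
\begin{equation*}
    Q\Bigl(x,\der{f_0}{x}+\tau\der{f_1}{x}\Bigr)=Q\Bigl(x,\der{f_0}{x}\Bigr)+\tau\,\der{f_1}{x^a}\,\der{Q}{p_a}\Bigl(x,\der{f_0}{x}\Bigr)\,.
\end{equation*}
Comparing the $\tau^0$-parts yields $f_1=Q(x,\partial f_0/\partial x)$, which is the asserted formula for $f_1$; comparing the $\tau^1$-parts yields the single equation $\dot f_0=(\partial_{x^a}f_1)\,\partial_{p_a}Q(x,\partial f_0/\partial x)$.

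It then remains to identify this $\tau^1$-equation. Substituting $f_1=Q(x,\partial f_0/\partial x)$ and differentiating by the chain rule,
\begin{equation*}
    \der{f_1}{x^a}=\der{Q}{x^a}\Bigl(x,\der{f_0}{x}\Bigr)+\dder{f_0}{x^a}{x^b}\,\der{Q}{p_b}\Bigl(x,\der{f_0}{x}\Bigr)\,,
\end{equation*}
and on contracting with $\partial_{p_a}Q$ the second-derivative term drops out, because $\partial^2 f_0/\partial x^a\partial x^b$ is symmetric in $a,b$ while $(\partial Q/\partial p_b)(\partial Q/\partial p_a)$ is antisymmetric in $a,b$ ($Q$ being odd) --- this is precisely the cancellation of the second-order terms in the proof of Theorem~\ref{thm.commut}. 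What survives is $(\partial_{x^a}Q)(\partial_{p_a}Q)$ evaluated at $(x,\partial f_0/\partial x)$, which by the coordinate expression for the canonical Poisson bracket used there equals $-\tfrac12(Q,Q)=H$ at that point. Hence the $\tau^1$-equation is exactly the Hamilton--Jacobi equation $\dot f_0=H(x,\partial f_0/\partial x)$. Conversely, given any solution $f_0$ of the latter with prescribed initial value (it exists, at least formally or locally, as the flow of $\X_H$ on $\funn(M)$) and setting $f_1:=Q(x,\partial f_0/\partial x)$, the pair $(f_0,f_1)$ solves the equation for $Q$. This proves the first assertion, and the particular case follows by putting $(Q,Q)=0$: then $H=0$, so $\dot f_0=0$ and $f_0$ is constant, $\tau\,\partial_t f=0$ on solutions, $D$ acts as $\partial_\tau$, and the solution is the odd shift $f=f_0+\tau Q(x,\partial f_0/\partial x)$.

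I expect the only genuine obstacle to be the sign bookkeeping in the super case: the parities of $\partial Q/\partial x^a$ and $\partial Q/\partial p_a$ involve $\tilde a$ and $\tilde Q=1$, and one must carry these through the Taylor expansion, the chain rule, and the two contractions to confirm both that the $\partial^2 f_0$-term cancels and that the surviving term equals $H$ with the correct overall sign. Since the same pattern has already been verified in the proof of Theorem~\ref{thm.commut}, this is routine but not entirely automatic. A more conceptual but less self-contained route would be to note that Theorem~\ref{thm.commut} gives $[\X_Q,\X_Q]=-\X_{(Q,Q)}=2\X_H$, i.e.\ $\X_Q^2=\X_H$, and then invoke the general description of the ``flow'' of an odd vector field over $\RR^{1|1}$ (the even flow of $\X_H$ in the even time composed with the odd shift by $\X_Q$); the direct computation above has the merit of not requiring that machinery to be set up.
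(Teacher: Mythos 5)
Your argument is correct. Note that the paper itself supplies no proof of this corollary at all: it is stated bare, immediately after Theorem~\ref{thm.commut}, and the intended justification is evidently the ``conceptual'' route you mention only in passing at the end --- namely that $[\X_Q,\X_Q]=-\X_{(Q,Q)}=2\X_H$ gives $\X_Q^2=\X_H$, after which one invokes the standard description of the flow of an odd vector field over $\RR^{1|1}$ as the even flow of its square composed with the odd shift. Your primary route --- expanding $f=f_0(t)+\tau f_1(t)$, reading off $f_1=Q(x,\lder{f_0}{x})$ from the $\tau^0$-component, and showing that the $\tau^1$-component collapses to $\dot f_0=H(x,\lder{f_0}{x})$ via the cancellation of the $\p^2 f_0$-term and the identification of the surviving contraction with $-\tfrac12(Q,Q)$ --- is a self-contained verification that in effect re-runs the proof of Theorem~\ref{thm.commut} in the special case $H=F=Q$, which is a perfectly sound trade-off: it costs you the sign bookkeeping you flag (in particular the surviving term is really $(-1)^{\at}\der{Q}{x^a}\der{Q}{p_a}$, and with the paper's conventions this is indeed $-\tfrac12(Q,Q)$), but it buys independence from any general theory of flows of odd vector fields, which the paper never sets up. Both the degenerate case $(Q,Q)=0$ and the converse direction (existence of $f_0$ granted, the pair solves the odd equation) are handled correctly.
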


We shall refer to the vector fields on the infinite-dimensional manifold $\funn(M)$ of the form $\X_H$ as to the \emph{Hamilton--Jacobi vector fields}.

Consider an arbitrary relation $R\subset T^*M_1\times T^*M_2$. We say that Hamiltonians $H_1\in\fun(T^*M_1)$ and $H_2\in\fun(T^*M_2)$ are \emph{$R$-related} if $p_1^*H_1=p_2^*H_2$, where $p_i$, $i=1,2$, are the restrictions of the canonical projections on $T^*M_i$. This terminology extends the classical notion of $\f$-related vector fields as shown by the following example.

\begin{example} Suppose $R=R_{\f}$ corresponds to a smooth map $\f\co M_1\to M_2$ as in Example~\ref{ex.rphi}. Then the condition that $H_1=H_1(x,p)$ and $H_2=H_2(y,q)$ are $R$-related amounts to
\begin{equation*}
    H_1\left(\!x,\der{\f}{x}(x)\,q\right)=H_2\bigl(\f(x),q\bigr)\,.
\end{equation*}
In particular, if $H_1(x,p)=X^a(x)p_a$ and $H_2(y,q)=Y^i(y)q_i$ correspond to vector fields $X\in\Vect(M_1)$ and $Y\in\Vect(M_2)$, we recognize the familiar condition
\begin{equation*}
    X^a(x)\,\der{\f^i}{x^a}  = Y^i\bigl(\f(x)\bigr)\,,
\end{equation*}
i.e.,
that the vector fields $X$ and $Y$ are $\f$-related.
\end{example}

Suppose  there is a canonical relation $\Phi\subset T^*M_1\times (-T^*M_2)$ of the form~\eqref{eq.relr2}. Consider the pullback $\Phi^*\co \funn(M_2)\to \funn(M_1)$\,. 

\begin{theorem} \label{thm.main}
If  Hamiltonians $H_1\in\fun(T^*M_1)$ and $H_2\in\fun(T^*M_2)$  are $\Phi$-related, then the Hamilton--Jacobi vector fields $\X_{H_2}\in \Vect(\funn(M_2))$ and $\X_{H_1}\in \Vect(\funn(M_1))$   are $\Phi^*$-related.
\end{theorem}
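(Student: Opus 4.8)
The plan is to unwind what ``$\Phi^*$-related'' means and reduce the assertion to the single pointwise identity that expresses $\Phi$-relatedness of $H_1$ and $H_2$. Two vector fields $X$ on $N_1$ and $Y$ on $N_2$ are related with respect to a map $\psi\co N_1\to N_2$ exactly when $(T\psi)\circ X=Y\circ\psi$; taking $\psi=\Phi^*\co\funn(M_2)\to\funn(M_1)$, $X=\X_{H_2}$, $Y=\X_{H_1}$, the claim is that for every $g\in\funn(M_2)$
\begin{equation*}
    (T\Phi^*)[g]\bigl(\X_{H_2}[g]\bigr)=\X_{H_1}\bigl[\Phi^*[g]\bigr]\,.
\end{equation*}
By Theorem~\ref{thm.deriv}, $(T\Phi^*)[g]=\f_g^*$, the ordinary pullback along the smooth map $\f_g\co M_1\to M_2$ attached to $g$ by~\eqref{eq.y}. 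So it remains to prove the equality of functions on $M_1$
\begin{equation*}
    \f_g^*\bigl(\X_{H_2}[g]\bigr)=\X_{H_1}\bigl[\Phi^*[g]\bigr]\,.
\end{equation*}

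Next I would re-express both sides through restrictions of the Hamiltonians to Lagrangian graphs. From~\eqref{eq.xh}, for $f\in\funn(M)$ one has $\X_H[f](x)=H\bigl(x,\der{f}{x}(x)\bigr)$; that is, $\X_H[f]$ is $H\in\fun(T^*M)$ restricted to the graph $\L_f\subset T^*M$ of $df$ and carried down to $M$ by the (diffeomorphic) projection. Hence the right-hand side is $H_1$ restricted to $\L:=\L_{\Phi^*[g]}=\Phi\circ\L_g$ of Theorem-Definition~\ref{thm.one}, while the left-hand side is the $\f_g$-pullback of $H_2$ restricted to $\L_g\subset T^*M_2$.

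The crux is then a short diagram chase in the cotangent bundles. By definition, $H_1$ and $H_2$ being $\Phi$-related means $p_1^*H_1=p_2^*H_2$ on $\Phi$, i.e.\ $H_1(x,p)=H_2(y,q)$ at every point $(x,p,y,q)\in\Phi$. Restrict this identity to $\Phi\cap p_2^{-1}(\L_g)$ and project it along $p_1$ onto $\L\subset T^*M_1$. A point of $\L$ over $x\in M_1$ has $p_a=\der{S}{x^a}(x,q)=\der{f}{x^a}(x)$ with $f=\Phi^*[g]$ and, by the very construction~\eqref{eq.lam}--\eqref{eq.y}, it is the $p_1$-image of the point of $\Phi$ whose $M_2$-data are $y^i=\f_g^i(x)$ and $q_i=\der{g}{y^i}(\f_g(x))$. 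Reading the restricted identity at this point and using $q_i=\der{g}{y^i}(y)$ to recognize $(y,q)$ as the point of $\L_g$ over $y=\f_g(x)$, one gets
\begin{equation*}
    \X_{H_1}\bigl[\Phi^*[g]\bigr](x)=H_1\bigl(x,\der{f}{x}(x)\bigr)=H_2\bigl(\f_g(x),\der{g}{y}(\f_g(x))\bigr)=\bigl(\f_g^*\,\X_{H_2}[g]\bigr)(x)\,,
\end{equation*}
which is exactly the identity we need.

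I expect the only place requiring care to be the middle step: checking that the point of $\Phi$ lying over a prescribed point of $\L$ carries precisely the $M_2$-data $y=\f_g(x)$, $q=\f_g^*\der{g}{y}$, with the parity signs distributed as in the definition~\eqref{eq.relr2} of $\Phi$ and in~\eqref{eq.y}. This is not a genuine obstacle, however --- it is simply a rereading of the definitions of $\L=\Phi\circ\L_g$ and of $\f_g$ given in and around Theorem-Definition~\ref{thm.one}. An alternative, purely computational route would be to differentiate the explicit formula~\eqref{eq.f} (or~\eqref{eq.fsymb}) for $f=\Phi^*[g]$ and verify the Hamilton--Jacobi identity $H_1\bigl(x,\der{f}{x}\bigr)=\f_g^*\bigl(H_2(\cdot,\der{g}{y})\bigr)$ term by term; but the geometric argument via restricting the single identity $H_1=H_2$ on $\Phi$ is both shorter and makes it transparent why $\Phi$-relatedness is the natural hypothesis.
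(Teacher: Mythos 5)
Your proposal is correct and follows essentially the same route as the paper: both reduce the claim via Theorem~\ref{thm.deriv} to the identity $\f_g^*\bigl(H_2(\cdot,\lder{g}{y})\bigr)=H_1\bigl(\cdot,\lder{f}{x}\bigr)$ and then obtain it by evaluating the $\Phi$-relatedness condition $H_1(x,p)=H_2(y,q)$ at the point of $\Phi$ determined by $q=\lder{g}{y}$ and $y=\f_g(x)$. The only cosmetic difference is that the paper verifies $\lder{f}{x^a}=\lder{S}{x^a}(x,q)$ by explicitly differentiating~\eqref{eq.f}, whereas you read it off from the already-established fact (Theorem-Definition~\ref{thm.one}) that $\L=\Phi\circ\L_g$ is the graph of $df$.
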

\begin{proof} The condition that two vector fields are related by a smooth map means that the map intertwines the corresponding infinitesimal shifts. We shall check that for the vector fields $\X_{H_2}$ and $\X_{H_1}$. Note that the condition that $H_1$ and $H_2$ are $\Phi$-related reads:
\begin{equation}\label{eq.hamrel}
    H_1(x,p)=H_2(y,q)\quad
\text{for}\quad
   p_a=\der{S}{x^a}\,(x,q) \ \text{and}\  y^i=(-1)^{\itt}\,\der{S}{q_i}\,(x,q)\,.
\end{equation}
Take an arbitrary $g\in \funn(M_2)$ and apply to it the infinitesimal shift along $\X_{H_2}$. We obtain
\begin{equation*}
    g_{\e}(y)=g(y)+\e\,H_2\Bigl(y,\der{g}{y}\Bigr)\,.
\end{equation*}
Apply to the result the map $\Phi^*$. %
By Theorem~\ref{thm.deriv},
\begin{equation*}
    \Phi^*[g_{\e}]  =\Phi^*[g]+\e\,\f_g^*\left(H_2\Bigl(y,\der{g}{y}\Bigr)\right)\,.
\end{equation*}
Recall that $\f_g^*$ simply means that $y^i$ should be found from the equation
\begin{equation}\label{eq.yy}
    y^{i}=(-1)^{\itt}\,\der{S}{q_i}\Bigl(x,\der{g}{y}\Bigr)\,.
\end{equation}
In the opposite direction, apply first $\Phi^*$ to $g$ to obtain $\Phi^*[g]$ and then apply to it the infinitesimal shift along $\X_{H_1}$. We arrive at
\begin{equation*}
    \Phi^*[g] +\e H_1\Bigl(x,\der{\Phi^*[g]}{x}\Bigr)\,.
\end{equation*}
Denote $\Phi^*[g]=:f$. To calculate the derivative in the argument,   write
\begin{equation*}
    f(x)=S(x,q)-y^iq_i+g(y)\,,
\end{equation*}
where
\begin{equation*}
    q_i=\der{g}{y}\quad  \text{and}\ \ y^{i}=(-1)^{\itt}\,\der{S}{q_i}\Bigl(x,\der{g}{y}\Bigr)\,,
\end{equation*}
so
\begin{multline*}
    \der{f}{x^a}=\der{S}{x^a}\Bigl(x,\der{g}{y}\Bigr)+
    \der{q_i}{x^a}\,\der{S}{q_i}\Bigl(x,\der{g}{y}\Bigr)-\der{y^i}{x^a}\,q_i-
    (-1)^{\itt\at}y^i\,\der{q_i}{x^a}+\der{y^i}{x^a}\der{g}{y^i}=\\
    \der{S}{x^a}\Bigl(x,\der{g}{y}\Bigr)+(-1)^{\itt}\der{q_i}{x^a}\,y^i-\der{y^i}{x^a}\,q_i-
    (-1)^{\itt\at+\itt(\at+\itt)}\,\der{q_i}{x^a}\,y^i+\der{y^i}{x^a}\,q_i=
    \der{S}{x^a}\Bigl(x,\der{g}{y}\Bigr)\,.
\end{multline*}
Therefore, to prove our statement, we need to compare the infinitesimal increments of $f(x)$, $f=\Phi^*[g]$,  given  in one case by
\begin{equation*}
    H_2\Bigl(y,\der{g}{y}\Bigr)
\end{equation*}
(after dropping $\e$) and in the other case by
\begin{equation*}
    H_1\Bigl(x,\der{S}{x^a}\Bigl(x,\der{g}{y}\Bigr)\Bigr)\,,
\end{equation*}
where in both cases $y^i$ is obtained from~\eqref{eq.yy}\,.
We see that the equality in question
\begin{equation*}
    H_1\Bigl(x,\der{S}{x^a}\Bigl(x,\der{g}{y}\Bigr)\Bigr)=H_2\Bigl(y,\der{g}{y}\Bigr)
\end{equation*}
follows from~\eqref{eq.hamrel}, which is valid for all $q$, in particular $q=\lder{g}{y}$\,.
\end{proof}

This theorem may be seen as the main statement of our paper.

\section{Application to homotopy algebras and algebroids}

Let us recall   some information concerning $L_{\infty}$-algebras. We shall use the higher derived bracket construction~\cite{tv:higherder}. A vector space $L$ together with an infinite sequence of odd symmetric multilinear operations (`brackets')
\begin{equation*}
    \underbrace{L\times \ldots \times L}_{r\, \text{times}} \to L\,,
\end{equation*}
where $r=0,1,2,3,\ldots\ $, is called an \emph{$L_{\infty}$-algebra} if the brackets satisfy the sequence of `higher  Jacobi identities'
\begin{equation}\label{eq.jac}
    J_n(v_1,\ldots,v_n)=0\,,
\end{equation}
for all $n=0,1,2,3,\ldots\ $,\footnote{More precisely,  this is an $L_{\infty}$-algebra ``in the symmetric version''. In the original terminology~\cite{lada:stasheff}, an $L_{\infty}$-algebra or `strongly homotopy Lie' algebra has antisymmetric brackets of alternating parities, namely, brackets with an even number of arguments being even and with an odd number of arguments, odd. These two notions transform to each other by the parity reversion of the underlying space, see~\cite{tv:higherder}.} where $J_n(v_1,\ldots,v_n)$ denotes the $n$th  \emph{Jacobiator}  of the brackets   defined by
\begin{equation}\label{eq.jacobiator}
    J_n(v_1,\ldots,v_n):= \sum_{\parbox{1.2cm}{\small $\scriptstyle \;k,\ell\,\geq\, 0\vspace{-4pt}\\k+\ell\,=n$}}\! \sum_{\text{$(k,\ell)$-shuffles}}\!(-1)^{\a}\{\{v_{\s(1)},\ldots,v_{\s(k)}\},v_{\s(k+1)},\ldots,v_{\s(k+\ell)}\}\,.
\end{equation}
(Here  the sign $(-1)^{\a}$ is the usual Koszul sign depending on the parities of permuted  arguments, e.g., $(-1)^0=+1$,  if all $v_i$ are even.) 

A sequence of symmetric multilinear operations of a given parity on a vector space $L$  can be assembled into a  formal vector field on the corresponding `vector supermanifold' $\bL$, where we use boldface for distinction. Conversely, given a vector field $X\in\Vect(\bL)$, a sequence of brackets on $L$ is obtained  as follows~\cite{tv:higherder}:
\begin{equation}\label{eq.highder}
    i_{\{v_1,\ldots,v_r\}} =\left[\ldots\left[\left[X,i_{v_1}\right],i_{v_2}\right],\ldots,i_{v_2}\right](0)\,,
\end{equation}
(evaluation at the origin), where $i_v$ is the constant vector field corresponding to a vector $v\in L$. Suppose a sequence of odd brackets on $L$ corresponds  to an odd vector field $Q\in \Vect (\bL)$. Then the sequence of their  Jacobiators $J_n$ 
corresponds to the even vector field $Q^2=\frac{1}{2}\,[Q,Q]$. (See~\cite{tv:higherder} for a more general statement.) Therefore there is a one-to-one correspondence between $L_{\infty}$-algebra structures on a vector space $L$ and formal homological vector fields on $\bL$. It is known that the language of homological vector fields is the most efficient way of working with $L_{\infty}$-algebras (see, e.g.~\cite{kontsevich:quant, kontsevich:quantlmp}). In particular, an \emph{$L_{\infty}$-morphism} from an $L_{\infty}$-algebra $L_1$ to an $L_{\infty}$-algebra $L_2$ (in the above description) can be defined as a formal supermanifold map $\f\co \bL_1\to \bL_2$ (in general, nonlinear) such that the corresponding homological vector fields $Q_i\in\Vect(\bL_i)$ are $\f$-related.

We shall apply these general notions to the setup where brackets are introduced on the space of smooth functions on some (super)manifold.

A  Hamiltonian $H\in \fun(T^*M)$ defines a sequence of symmetric brackets on the vector space $\fun(M)$  by the higher derived bracket construction~\cite{tv:higherder}\,:
\begin{equation}\label{eq.brackets}
    \{f_1,\ldots,f_r\}_H:=\left(\ldots\left(\left(H,f_1\right),f_2\right),\ldots,f_r\right)_{|M}\,.
\end{equation}
The parity of these brackets is the same as the parity of $H$.  All brackets~\eqref{eq.brackets} are multiderivations w.r.t. the associative multiplications of functions.
If we expand $H$ as
\begin{equation}\label{eq.ham}
    H(x,p)=H_0(x)+H^a(x)p_a +\frac{1}{2}\,H^{ab}(x)p_bp_a + \frac{1}{3!}\,H^{abc}(x)p_cp_bp_a+\ldots \,,
\end{equation}
with symmetric coefficients $H^{a_1\ldots a_r}$, then
\begin{equation}\label{eq.brackets1}
    \{f_1,\ldots,f_r\}_H= \pm H^{a_1\ldots a_r}(x)\,\p_{a_r}f\ldots \p_{a_1}f\,.
\end{equation}
We shall refer to the Hamiltonian generating a given sequence of brackets as to the   \emph{master Hamiltonian}.
It is natural to ask what is the corresponding vector field on the infinite-dimensional supermanifold $\funn(M)$. The answer is given by the following statement.
\begin{theorem} \label{thm.derhamjac}
The higher derived brackets~\eqref{eq.brackets} generated by   $H\in \fun(T^*M)$ assemble to the Hamilton--Jacobi vector field $\X_H\in\Vect(\funn(M))$\,,
\begin{equation}\label{eq.xh2}
    \X_H = \new{(-1)^{\Ht m}\int\limits_{M^{n|m}}}\!\!  Dx\; H\Bigl(x,\der{f}{x}\,(x)\Bigr)\var{}{f(x)}\,.
\end{equation}
\end{theorem}
\begin{proof} Directly. One needs to apply~\eqref{eq.brackets} to $f_1=\ldots=f_r=f$, for some   even function $f\in \funn(M)$.
\end{proof}

When the master Hamiltonian $H$ is odd, the derived  brackets~\eqref{eq.brackets} are also odd and it is legitimate to ask   whether the Jacobi identities~\eqref{eq.jac} hold for them. As follows from a general theorem~\cite{tv:higherder}, if an odd Hamiltonian $H$   obeys the classical \emph{master equation}
\begin{equation}\label{eq.master}
    (H,H)=0\,,
\end{equation}
then all the Jacobi identities are satisfied for its derived brackets, so the space $\fun(M)$   with these brackets is an $L_{\infty}$-algebra. Considered also with the ordinary multiplication of functions, it is a  {homotopy Schouten algebra}. (By definition, a \emph{homotopy Schouten algebra} or \emph{$S_{\infty}$-algebra} is a commutative associative algebra endowed with an infinite sequence of odd symmetric brackets that satisfy the higher Jacobi identities and also the Leibniz identity in each argument~\cite{tv:higherder}.) A supermanifold $M$ whose algebra of functions is endowed with odd brackets making it a homotopy Schouten algebra will be called a \emph{homotopy Schouten manifold} or an \emph{$S_{\infty}$-manifold}.

\begin{remark} By Theorem~\ref{prop.commutofxh}, for an  odd Hamiltonian $H$ we have
\begin{equation*}
    [\X_H,\X_H]=-\X_{(H,H)}\,.
\end{equation*}
So if $H$ satisfies $(H,H)=0$, then $\X_H^2=0$. This gives a direct proof that such an $H$ generates an $L_{\infty}$-algebra.
\end{remark}

Consider homotopy Schouten  manifolds $M_1$ and $M_2$. Let $H_1$ and $H_2$ be the respective master Hamiltonians. Let $\Phi\subset T^*M_1\times (-T^*M_2)$ be a canonical relation of the form~\eqref{eq.relr2}.
\begin{coro}[From Theorems~\ref{thm.main} and~\ref{thm.derhamjac}] If   master Hamiltonians $H_1$ and $H_2$ are $\Phi$-related, then the formal mapping
of function  supermanifolds
\begin{equation*}
    \Phi^*\co \funn(M_2)\to \funn(M_1)
\end{equation*}
\emph{(in general, nonlinear)} is an $L_{\infty}$-morphism of the corresponding $L_{\infty}$-algebras.
\end{coro}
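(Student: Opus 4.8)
The plan is to deduce this corollary directly from Theorem~\ref{thm.main} together with the correspondence between homological vector fields and $L_{\infty}$-morphisms recalled just above the statement. Recall that by the higher derived bracket construction, the $L_{\infty}$-algebra structure on $\fun(M_i)$ coming from the odd master Hamiltonian $H_i$ (which satisfies $(H_i,H_i)=0$) is encoded by the homological vector field $\X_{H_i}\in\Vect(\funn(M_i))$: indeed, by the theorem on p.~\pageref{eq.xh2} the brackets~\eqref{eq.brackets} assemble into $\X_{H_i}$, and by Theorem~\ref{prop.commutofxh} we have $[\X_{H_i},\X_{H_i}]=-\X_{(H_i,H_i)}=0$, so $\X_{H_i}^2=0$. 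Thus $\funn(M_i)$ equipped with $\X_{H_i}$ is precisely the formal supermanifold with homological vector field that represents the $L_{\infty}$-algebra $\fun(M_i)$.

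First I would observe that, under the identification of $L_{\infty}$-algebras with (formal) homological vector fields, an $L_{\infty}$-morphism is by definition a map of the underlying (formal) supermanifolds intertwining the homological vector fields. Here the underlying supermanifolds are $\funn(M_2)$ and $\funn(M_1)$, and the candidate morphism is $\Phi^*\co\funn(M_2)\to\funn(M_1)$, which is a genuine (generally nonlinear) map of these functional supermanifolds by Theorem-Definition~\ref{thm.one} and the analysis following it (its expansion~\eqref{eq.phiexpanded} exhibits it as an honest morphism in the formal sense). So the corollary amounts to the single assertion that $\Phi^*$ relates the homological vector fields $\X_{H_2}$ and $\X_{H_1}$.

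Next, this is exactly the content of Theorem~\ref{thm.main} applied to $H_1$ and $H_2$: the hypothesis that the master Hamiltonians $H_1$ and $H_2$ are $\Phi$-related is precisely the hypothesis of that theorem, and its conclusion is that $\X_{H_2}$ and $\X_{H_1}$ are $\Phi^*$-related. Combining with the observation of the previous paragraph, $\Phi^*$ is an $L_{\infty}$-morphism. I would also note that the parity bookkeeping is automatic: $H_1,H_2$ odd forces $\X_{H_1},\X_{H_2}$ odd (as stated after~\eqref{eq.xh}), matching the convention in which an $L_{\infty}$-structure is an odd homological vector field, and there is no compatibility condition with the associative multiplications to check since homotopy Schouten structures are not required to be preserved — only the $L_{\infty}$-part is.

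There is essentially no obstacle here: the corollary is a formal consequence of Theorem~\ref{thm.main} plus the dictionary between homological vector fields and $L_{\infty}$-morphisms, and the only thing worth spelling out is that ``$\Phi$-related master Hamiltonians'' literally translates via Theorems~\ref{prop.commutofxh}, the assembly theorem for~\eqref{eq.xh2}, and~\ref{thm.main} into ``$\Phi^*$ is a morphism of the associated homological vector fields.'' If one wished to be completely explicit, the mildest point to verify is that $\funn(M)$ with $\X_H$ really is the formal supermanifold-with-homological-field attached to the $L_{\infty}$-algebra $\fun(M)$ under the derived-bracket correspondence of~\cite{tv:higherder}; this is immediate from~\eqref{eq.highder}, \eqref{eq.brackets} and~\eqref{eq.xh2}, since evaluating iterated commutators of $\X_H$ with constant vector fields at the origin $0\in\funn(M)$ reproduces exactly the brackets~\eqref{eq.brackets}.
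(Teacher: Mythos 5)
Your proposal is correct and follows exactly the route the paper intends: the corollary is stated as an immediate consequence of Theorem~\ref{thm.main} combined with the identification of the $L_{\infty}$-structure on $\fun(M_i)$ with the homological vector field $\X_{H_i}$ on $\funn(M_i)$ (via the assembly theorem for~\eqref{eq.xh2} and the Remark that $(H,H)=0$ implies $\X_H^2=0$ by Theorem~\ref{prop.commutofxh}). The paper leaves this deduction implicit, and your write-up simply spells out the same dictionary argument.
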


\begin{example} A very special case is that of  $\Phi$ corresponding to an ordinary map $\f\co M_1\to M_2$, so that the master Hamiltonians are $\f$-related. Then $\f^*\co \fun(M_2)\to \fun(M_1)$ is the usual pullback, hence linear. It gives a  strict  morphism of   $L_{\infty}$-algebras, for which all brackets are preserved separately.
\end{example}

This can also be applied to Lie algebroid theory,  as follows. 

For Lie bialgebroids, a {Lie bialgebroid morphism} $E_1\to E_2$ is defined as a morphism of Lie algebroids $\f\co E_1\to E_2$ such that it is also a Poisson map for the Lie--Poisson brackets induced by the Lie algebroid structures on the dual bundles~\cite{mackenzie:book2005}. The latter condition is also equivalent to $\f$ being a Poisson map for the  Lie--Schouten brackets induced on $\Pi E_1$ and $\Pi E_2$. The conditions of a Lie algebroid morphism and a Poisson map  naturally combine together into one condition that the odd Hamiltonians defining the $QS$-structures (see~\cite{tv:graded}) on $\Pi E_1$ and $\Pi E_2$ are $\f$-related. This is   equivalent to $\f^*\co \fun(\Pi E_2)\to \fun(\Pi E_1)$ being a morphism of differential Schouten algebras. The question arises, what should stand for all that in the homotopy case.

A structure of an $L_{\infty}$-bialgebroid is defined on a vector bundle $E$ by an odd master Hamiltonian $H$ satisfying the master equation $(H,H)=0$. It particular it makes  
the algebra of functions $\fun(\Pi E)$ is a homotopy Schouten algebra. How one should define morphisms of $L_{\infty}$-bialgebroid? We should be looking for constructions leading to  $L_{\infty}$-morphisms of the  algebras of functions. Ordinary morphisms of vector bundles can only lead to  strict morphisms. This is clearly not sufficient. The correct notion should use   nonlinear pullbacks (as can be showed).

\begin{example} \label{ex.highkoszul}
Let a supermanifold $M$ have a homotopy Poisson structure (see, e.g.,~\cite{tv:higherpoisson} and in the  Appendix). (The difference with a homotopy Schouten structure is that the brackets are antisymmetric and have alternating parities, so that the binary bracket is even.) In~\cite{tv:higherpoisson} we showed that it induces the structure of an $L_{\infty}$-algebroid on the cotangent bundle $T^*M$. (This is the analog of the Lie algebroid structure on $T^*M$ for an  ordinary Poisson manifold.) The corresponding sequence of odd brackets on functions on $\Pi TM$ are called the \emph{higher Koszul brackets}. Recall that  functions on $\Pi TM$ are (pseudo)differential  forms on $M$. In the classical situation, there is only the binary  Koszul bracket on forms induced by an ordinary Poisson structure and the pullback w.r.t. the Poisson anchor maps it to the canonical Schouten bracket of multivector fields. In~\cite{tv:higherpoisson}, we posed the problem of extending this picture to the homotopy Poisson case, i.e., to find an $L_{\infty}$-morphism between the higher Koszul brackets and the canonical Schouten bracket. The solution is given by
a certain nonlinear pullback $\Phi^*\co \funn(\Pi TM)\to \funn(\Pi T^*M)$ (see~\cite{tv:microformal}). This question was the departure point of the present work.
\end{example}

Example~\ref{ex.highkoszul} has an abstract form, which is   an $L_{\infty}$ version of  `triangular Lie bialgebroids'  of Mackenzie--Xu~\cite{mackenzie:bialg} and in particular of the canonical Lie bialgebroid morphism $E^*\to E$ defined for them (which is an abstract analog of the Poisson anchor, see ~\cite{mackenzie:book2005}).
We elaborate  these questions in~\cite{tv:microformal} and a forthcoming paper with H.~M.~Khudaverdian.

\appendix

\new{\section*{Appendix: ``nonlinear pullbacks'' for odd functions}}

In the main text we construct and study the mapping of \emph{even} functions on supermanifolds
\begin{equation*}
    \Phi^*\co \funn(M_2)\to \funn(M_1)\,
\end{equation*}
associated with a canonical relation $\Phi\subset T^*M_1\times (-T^*M_2)$.
There is a parallel construction of a similar mapping of \emph{odd} functions
\begin{equation*}
    \Psi^*\co \pfunn(M_2)\to \pfunn(M_1)\,.
\end{equation*}
Below we give a brief outline of the corresponding statements without repeating the proofs that generally go along the same lines. In the same way as the constructions in the main text are based on the symplectic geometry of the cotangent bundles of the (super)manifolds involved, the parallel constructions here make use of odd symplectic geometry. It is well known that there are fundamental differences between even and odd symplectic geometry (see, e.g.,~\cite{hov:deltabest,hov:khn1}, \cite{tv:laplace1}), but up to a certain point everything remains similar and it suffices for our purpose.

For a supermanifold $M$  consider the anticotangent bundle $\Pi T^*M$. If $x^a$ are  local coordinates on $M$, then on $\Pi T^*M$ we obtain local coordinates $x^a,x^*_a$, where the variables $x^*_a$ have the parities opposite to the parities of the corresponding $x^a$ and they transform as
\begin{equation*}
    x^*_a=\der{x^{a'}}{x^a}\,x^*_a\,.
\end{equation*}
The variables $x^a,x^*_a$ form canonically conjugate pairs w.r.t. the odd bracket (the canonical Schouten bracket), where
\begin{equation*}
    \lsch x^*_a,x^b \rsch =\d_a^b\,,
\end{equation*}
and $\lsch F,G\rsch = -(-1)^{(\Ft+1)(\Gt+1)}\lsch G,F\rsch\,$ (see, e.g.,~\cite{tv:graded}). It corresponds to the canonical odd symplectic form $\o=d(dx^a\,x^*_a)$.

\new{Let $\Psi\subset \Pi T^*M_1\times (-\Pi T^*M_2)$ be a canonical relation such that it can be specified by an odd generating function $\S=\S(x,y^*)$,\footnote{\new{Unlike the even case, a Lagrangian submanifold $\Lambda$ of an odd symplectic manifold $N$ has a discrete invariant. Namely, if $\dim N=n|n$, then $\dim \Lambda$ can take any of the values $n-k|k$, where $k=0,1,\ldots,n$. The relations $\Psi\subset \Pi T^*M_1\times (-\Pi T^*M_2)$ that we consider have this invariant equal to $m_1+n_2$, where $\dim M_1=n_1|m_1$ and $\dim M_2=n_2|m_2$.}}
\begin{equation*}
    \Psi =\left\{\,(x^a,x^*_a,y^i,y^*_i)\ \Bigl| \ x^*_a=\der{\S}{x^a}\,(x,y^*)\,, \ y^i=\der{\S}{y^*_i}\,(x,y^*)\,\Bigr.\right\}\,.
\end{equation*}
Here   $x^a,x^*_a$ are coordinates on $\Pi T^*M_1$ and $y^i,y^*_i$ are coordinates on $\Pi T^*M_2$. Then a given odd function $g\in\pfunn(M_2)$ is mapped to the odd function $f=:\Psi^*[g]\in \pfunn(M_1)$ defined by the formula
\begin{equation*}
    f(x)=g(y)+\S(x,y^*)-y^iy^*_i\,,
\end{equation*}
where
\begin{align*}
    y^*_i&=\der{g}{y^i}\,(y)\,, \intertext{and $y^i$ is determined from the equation}
    y^i&=\der{\S}{y^*_i}\,\Bigl(x,\der{g}{y}\,(y)\Bigr)\,
\end{align*}
(similarly to~\eqref{eq.f}, \eqref{eq.q}, \eqref{eq.y} above). This equation can be solved by iterations. If we expand
\begin{equation*}
    \S(x,y^*)=\S_0(x) + \f^i(x)y^*_i + \frac{1}{2}\,\S^{ij}(x)y^*_jy^*_i +\ldots \ ,
\end{equation*}
then the zeroth order term $\S_0$ is just a fixed odd function on $M_1$, the first order term corresponds to an ordinary smooth map $\f\co M_1\to M_2$, and the higher order terms give a `perturbation'. As in the main text, we obtain $\f_g\co M_1\to M_2$ as a perturbative series
\begin{equation*}
    \f_g=\f+\f_1[g]+\f_2[g]+ \ldots
\end{equation*}
with terms of orders $1, 2, \ldots$ in $g$.}

\smallskip
{\small

\begin{example}
\new{As an exercise, one can calculate the  linear and quadratic terms in $g$ to obtain
\begin{multline*}
    \f^i_g(x)=\f^i(x)\  + \ \underbrace{\S^{ij}(x)\der{g}{y^j}\,\bigl(\f(x)\bigr)}_{\f_1[g]}\  + \\
     \underbrace{\S^{i(j}(x)\S^{k)l}(x)\,\der{g}{y^l}\bigl(\f(x)\bigr)\,\dder{g}{y^k}{y^j}\bigl(\f(x)\bigr) +
    \frac{1}{2}\,\S^{ijk}(x)\,\der{g}{y^k}\,\bigl(\f(x)\bigr)\,\der{g}{y^j}\,\bigl(\f(x)\bigr)}_{\f_2[g]} \  + \ \ldots
\end{multline*}
(the round brackets in the indices denote symmetrization). The particular expression is not very important, but it gives a feeling of the general  appearance of the terms in the expansion.}
\end{example}

}

\new{Then the image of $\Psi^*$ in a greater detail is
\begin{equation*}
    \Psi^*[g](x)=g(\f_g(x))+\S\left(x,\der{g}{y}\,(\f_g(x))\right)-\f^i_g(x)\,\der{g}{y^i}\,(\f_g(x))\,,
\end{equation*}
and, as in the main text,  we can obtain that to the second order
\begin{equation*}
    \Psi^*[g](x)=\S_0(x)\ + \ g(\f(x))\  + \ \frac{1}{2}\,\S^{ij}(x)\,\der{g}{y^j}\,(\f(x))\,\der{g}{y^i}\,(\f(x))\ + \ \ldots
\end{equation*}
}

\new{Composition of canonical relations of the considered form   leads to another `formal category' extending  the category of smooth supermanifolds and their smooth maps, different from the one considered in the main text.\footnote{\new{Working in a formal framework  allows to go around the standard difficulties with composition. Compare remark at the end of section~\ref{sec.mainconstr}. Moreover, for ordinary (purely even) manifolds, the fibers of $\Pi T^*M$ are odd, hence there is no difference between formal and non-formal treatments. It was \v{S}evera~\cite{severa:oddsympl2004} who first noted that in that case Weinstein's symplectic ``category''   is a genuine category without quotes.}} One should  expect
\begin{equation*}
    (\Psi_1\circ \Psi_2)^*=\Psi_2^*\circ \Psi_1^*\,,
\end{equation*}
so ``nonlinear pullbacks'' give a nonlinear representation of this formal category on the spaces of odd functions.}

\new{Similarly to Theorem~\ref{thm.deriv} of the main text, we have}

\new{\begin{theore} The derivative of the formal nonlinear mapping
\begin{equation*}
    \Psi^*\co \pfunn(M_2)\to \pfunn(M_1)
\end{equation*}
at a point  $g\in \pfunn(M_2)$ is given by the formula:
\begin{equation*}
    (T\Psi^*)[g]=\f_g^*\,,
\end{equation*}
where
\begin{equation*}
    \f_g^*\co \fun(M_2)\to \fun(M_1)
\end{equation*}
is the  ordinary  pullback w.r.t. the   map 
$\f_g\co M_1\to M_2$ depending on $g$. \qed
\end{theore}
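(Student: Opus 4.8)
The plan is to imitate the proof of Theorem~\ref{thm.deriv} almost verbatim, with the even generating function $S(x,q)$ and the canonical (even) symplectic structure on cotangent bundles replaced by the odd generating function $\S(x,y^*)$ and the canonical odd symplectic structure on anticotangent bundles, and then to check that the sign bookkeeping still produces the same cancellation.

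Concretely, I would represent a tangent vector to $\pfunn(M_2)$ at $g$ by an infinitesimal variation $g_{\e}(y)=g(y)+\e\,u(y)$, where $u\in\fun(M_2)$ and $\e$ is an auxiliary parameter with $\e^2=0$ whose parity is chosen so that $g_{\e}$ is again odd. I would then compute $f_{\e}:=\Psi^*[g_{\e}]$ from the defining formulae of the appendix,
\begin{equation*}
    f_{\e}(x)=g_{\e}(y_{\e})+\S(x,y^*_{\e})-y^i_{\e}y^*_{\e i}\,,\qquad
    y^*_{\e i}=\der{g_{\e}}{y^i}\,(y_{\e})\,,\qquad
    y^i_{\e}=\der{\S}{y^*_i}\Bigl(x,\der{g_{\e}}{y}\,(y_{\e})\Bigr)\,,
\end{equation*}
substituting the first-order expansions $y^i_{\e}=y^i+\e\,y^i_1$ and $y^*_{\e i}=y^*_i+\e\,y^*_{1i}$ about the unperturbed solution $(y,y^*)$ attached to $g$, and expanding to order $\e$.

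The decisive point, exactly as in the even case, is that every term containing the unknown increments $y_1$ and $y^*_1$ drops out, so that $f_{\e}(x)=f(x)+\e\,u(y)\pmod{\e^2}$. This is the odd-symplectic analog of the envelope theorem: the pair $(y,y^*)$ is by construction the stationary point in $(y,y^*)$ of $g(y)+\S(x,y^*)-y^iy^*_i$, hence its first variation vanishes, and the only surviving contribution of the perturbation is the explicit $\e\,u(y_{\e})=\e\,u(y)+O(\e^2)$. In detail, the term $\e\,y^i_1\,\der{g}{y^i}$ coming from expanding $g$ cancels (after using $y^*_i=\der{g}{y^i}$) against the cross term $-\e\,y^i_1y^*_i$ from $-y^i_{\e}y^*_{\e i}$, and the term $\e\,y^*_{1i}\,\der{\S}{y^*_i}$ coming from expanding $\S$ cancels (after using $y^i=\der{\S}{y^*_i}$) against the cross term $-\e\,y^iy^*_{1i}$. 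Since $y=\f_g(x)$ by the very definition of the associated map $\f_g$, this yields $f_{\e}=f+\e\,\f_g^*u$, i.e. $(T\Psi^*)[g]=\f_g^*$.

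The one genuinely new ingredient, and the step I expect to require care, is the sign accounting: $\S$ is odd, the canonical pairing $y^iy^*_i$ and the Schouten bracket are odd, and the parity of $\e$ is shifted relative to the even case, so the Koszul signs attached to the cross terms $y^i_1y^*_i$, $y^iy^*_{1i}$ and to the derivatives $\der{\S}{y^*_i}$, $\der{g}{y^i}$ are not the literal symbols $(-1)^{\tilde\e\itt}$, etc., occurring in the proof of Theorem~\ref{thm.deriv}. I would handle this by carrying the parities $\tilde u$, $\itt$, $\at$ explicitly through the expansion, in the style of the proof of Theorem~\ref{prop.commutofxh}, and checking that the two ``chain-rule'' terms are still annihilated by the two cross terms. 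Once this verification is done, the argument is word-for-word the even one, which is why the statement can be quoted with no proof.
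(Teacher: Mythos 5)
Your proposal is correct and follows exactly the route the paper intends: the appendix explicitly omits this proof because it ``goes along the same lines'' as that of Theorem~\ref{thm.deriv}, and your expansion of $f_{\e}=\Psi^*[g_{\e}]$ to first order in $\e$, with the stationarity of $(y,y^*)$ killing all terms involving the unknown increments $y_1$, $y^*_1$ and leaving only $\e\,u(\f_g(x))$, is precisely the even-case argument transplanted to the odd generating function $\S(x,y^*)$. Your identification of the cancellation as an envelope-theorem phenomenon, and your flagging of the parity of $\e$ and the Koszul signs as the only points needing re-verification, are both apt.
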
}

\new{Analogs of the Hamilton--Jacobi vector fields introduced in the main text, in the `odd' setup take the form
\begin{equation*}
    \X_H = (-1)^{\Ht(m+1)}\int\limits_{M^{n|m}}\!  Dx\; H\left(\!x,\der{f}{x}\right)\var{}{f(x)}\,,
\end{equation*}
where $H\in\fun(\Pi T^*M)$ is a multivector (or `pseudomultivector') field on $M$.
Here we need to emphasize that the function $f$ is \emph{odd}, so in particular the substitution of its derivatives $\lder{f}{x^a}$ for the antimomenta $x^*_a$ makes good sense. In other words, we have infinitesimal shifts of odd functions on $M$ of the form
\begin{equation*}
    f\mapsto f_{\e}=f  +\e \X_H[f]\,, \ \text{where} \  \X_H[f](x)=  H\Bigl(x,\der{f}{x}\,(x)\Bigr)\,.
\end{equation*}
Here $\e^2=0$ and $\tilde\e=\Ht +1$. The parity of the vector field $\X_H$ on $\pfunn(M)$ is the opposite to the parity of $H$.}

\new{\begin{theore}
For arbitrary multivector fields $H$ and $F$,
\begin{equation*}
    \left[\X_H,\X_F\right]=(-1)^{\Ht}\X_{\lsch H,F\rsch}\,,
\end{equation*}
where the bracket at the l.h.s. is the commutator of vector fields on the infinite-dimensional supermanifold $\pfunn(M)$ and the bracket at the r.h.s. is the canonical Schouten bracket on $\Pi T^*M$. \qed
\end{theore}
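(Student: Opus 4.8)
The plan is to prove this exactly as Theorem~\ref{thm.commut}, by composing four infinitesimal Hamilton--Jacobi shifts on $\pfunn(M)$ and reading off the mixed second-order term. Fix an odd function $f_0\in\pfunn(M)$ and nilpotent parameters $\e,\h$ with $\tilde\e=\Ht+1$, $\tilde\h=\Ft+1$. Form successively $f_1(x)=f_0(x)+\e\,H\bigl(x,\p f_0/\p x\bigr)$, then $f_2=f_1+\h\,F\bigl(x,\p f_1/\p x\bigr)$, then $f_3=f_2-\e\,H\bigl(x,\p f_2/\p x\bigr)$, and finally $f_4=f_3-\h\,F\bigl(x,\p f_3/\p x\bigr)$, all taken modulo $\e^2=\h^2=0$. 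Here $\p f/\p x^a$ is substituted for the antimomentum $x^*_a$: since $\tilde f=1$, the derivative $\p f/\p x^a$ has parity $\tilde a+1$, which is exactly the parity of $x^*_a$, so every substitution is parity-consistent. By construction $[\X_H,\X_F][f_0]$ is what is extracted from the $\h\e$-component of $f_4-f_0$.

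The computation then runs by the chain rule, just as in the even case. Expanding $F(x,\p f_1/\p x)$ to first order in $\e$ gives $F(x,\p f_0/\p x)+\e\,\der{}{x^a}\!\bigl(H(x,\p f_0/\p x)\bigr)\,\der{F}{x^*_a}(x,\p f_0/\p x)$ up to a Koszul sign, and similarly for the substitutions producing $f_3$ and $f_4$. After all four steps, the terms of order $\e^0\h^0$, $\e^1\h^0$ and $\e^0\h^1$ cancel; among the remaining $\h\e$-terms, those containing the second derivative $\dder{f_0}{x^a}{x^b}$ cancel in pairs by the symmetry of second partials; and the two surviving terms, built from $\der{H}{x^a}\,\der{F}{x^*_a}$ and $\der{F}{x^a}\,\der{H}{x^*_a}$, combine by the very definition of the canonical Schouten bracket on $\Pi T^*M$ into $\lsch H,F\rsch$ evaluated at $(x,\p f_0/\p x)$. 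Thus $f_4(x)=f_0(x)+\h\e\cdot(\text{sign})\cdot\lsch H,F\rsch(x,\p f_0/\p x)$, and translating this back into a statement about the commutator of vector fields on $\pfunn(M)$, with $\tilde\e=\Ht+1$ taken into account, gives $[\X_H,\X_F]=(-1)^{\Ht}\X_{\lsch H,F\rsch}$.

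The only genuine obstacle is the sign bookkeeping, which is heavier than in Theorem~\ref{thm.commut} because every ingredient is parity-shifted: the antimomenta $x^*_a$ have parity $\tilde a+1$, the parameters $\e,\h$ have parities $\Ht+1$ and $\Ft+1$, the odd bracket obeys $\lsch H,F\rsch=-(-1)^{(\Ht+1)(\Ft+1)}\lsch F,H\rsch$ rather than ordinary graded skew-symmetry, and the Leibniz-type expansion of $H$ and $F$ along $\p f/\p x$ carries extra signs coming from $\tilde f=1$. Keeping track of the transposition of $\e$ past $\h$ and past the derivatives of $H$ and $F$ at each of the four steps is precisely what produces the overall factor $(-1)^{\Ht}$ on the right-hand side, in contrast with the bare minus sign of the even case. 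Since the skeleton of the argument is identical to the proof of Theorem~\ref{thm.commut}, which was written out in full there, we present it here only in this outlined form, leaving the verification of the signs to the reader.
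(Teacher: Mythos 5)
Your proposal is correct and takes essentially the same approach as the paper: for this appendix theorem the paper gives no separate argument at all, stating only that the proofs ``generally go along the same lines'' as in the main text, and your outline is exactly the four-shift commutator computation of Theorem~\ref{thm.commut} transplanted to $\pfunn(M)$ with the parity shifts ($\tilde\e=\Ht+1$, antimomenta of parity $\tilde a+1$) correctly accounted for. The only caveat is that, like the paper, you defer the sign bookkeeping that produces the factor $(-1)^{\Ht}$, so the claim is verified only up to that (admittedly routine but nontrivial) check.
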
}

\new{Multivector fields $H_1\in\fun(\Pi T^*M_1)$ and $H_2\in\fun(\Pi T^*M_2)$ are said to be  {$R$-related} for  a relation $R\subset \Pi T^*M_1\times \Pi T^*M_2$ if $p_1^*H_1=p_2^*H_2$. For a canonical relation $\Psi\subset \Pi T^*M_1\times (-\Pi T^*M_2)$ as above the analog of Theorem~\ref{thm.main} holds:}

\new{\begin{theore} If multivector fields $H_1\in\fun(\Pi T^*M_1)$ and $H_2\in\fun(\Pi T^*M_2)$ are $\Psi$-related, then the   vector fields $\X_{H_2}\in\Vect(\pfunn(M_2))$  and $\X_{H_1}\in\Vect(\pfunn(M_1))$ are $\Psi^*$-related, for   $\Psi^*\co \pfunn(M_2)\to \pfunn(M_1)$\,. \qed
\end{theore}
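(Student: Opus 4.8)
The plan is to run the proof of Theorem~\ref{thm.main} in the odd symplectic setting, replacing $T^*M_i$ by $\Pi T^*M_i$, the generating function $S(x,q)$ by the odd generating function $\S(x,y^*)$, and the canonical Poisson bracket by the canonical Schouten bracket $\lsch\,\cdot\,,\cdot\,\rsch$, while keeping track of the parity shifts peculiar to the odd case. Recall that $\Psi$-relatedness of $H_1$ and $H_2$ means
\begin{equation*}
    H_1(x,x^*)=H_2(y,y^*)\qquad\text{whenever}\quad x^*_a=\der{\S}{x^a}\,(x,y^*)\ \text{and}\ y^i=\der{\S}{y^*_i}\,(x,y^*)\,,
\end{equation*}
an identity holding for all values of $y^*$. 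To prove $\Psi^*$-relatedness of the Hamilton--Jacobi vector fields one checks that $\Psi^*\co\pfunn(M_2)\to\pfunn(M_1)$ intertwines the infinitesimal shifts $f\mapsto f+\e\,H(x,\der{f}{x})$; here $f$ is \emph{odd}, so substituting the derivatives $\der{f}{x^a}$ (of parity opposite to $x^a$) into the slot of the antimomentum $x^*_a$ is parity-consistent, exactly as required for $\X_H$ to be defined on $\pfunn(M)$.

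First I would take an arbitrary odd $g\in\pfunn(M_2)$, shift it along $\X_{H_2}$ to $g_{\e}(y)=g(y)+\e\,H_2\bigl(y,\der{g}{y}\bigr)$, and apply $\Psi^*$; by the analog of Theorem~\ref{thm.deriv} stated above this yields $\Psi^*[g]+\e\,\f_g^*\bigl(H_2(y,\der{g}{y})\bigr)$, where $y^i=\der{\S}{y^*_i}\bigl(x,\der{g}{y}\bigr)$ defines $\f_g$. In the other order, I would write $f:=\Psi^*[g]=g(y)+\S(x,y^*)-y^iy^*_i$ with $y^*_i=\der{g}{y^i}(y)$ and $y^i$ as above, differentiate in $x^a$ treating $y$ and $y^*$ as functions of $x$, and check --- just as in the proof of Theorem~\ref{thm.main}, where the implicit contributions cancel in pairs once the symplectic signs are collected --- that $\der{f}{x^a}=\der{\S}{x^a}\bigl(x,\der{g}{y}\bigr)$. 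Shifting $f$ along $\X_{H_1}$ then gives $f+\e\,H_1\bigl(x,\der{\S}{x}(x,\der{g}{y})\bigr)$, and comparing the two expressions reduces the claim to the single identity
\begin{equation*}
    H_1\Bigl(x,\der{\S}{x}\,\bigl(x,\der{g}{y}\bigr)\Bigr)=H_2\Bigl(y,\der{g}{y}\Bigr)\,,\qquad y^i=\der{\S}{y^*_i}\,\bigl(x,\der{g}{y}\bigr)\,,
\end{equation*}
which is precisely the $\Psi$-relatedness of $H_1$ and $H_2$ specialized to $y^*=\der{g}{y}(\f_g(x))$.

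The step I expect to be the only genuine obstacle is the verification that $\der{f}{x^a}=\der{\S}{x^a}(x,\der{g}{y})$, i.e.\ that the terms produced by differentiating $\S(x,y^*)$, $-y^iy^*_i$ and $g(y)$ with respect to $x^a$ (with $y^i=\f_g^i(x)$ and $y^*_i=\der{g}{y^i}(\f_g(x))$) still cancel in the odd symplectic setting. This is the odd counterpart of the cancellation displayed in the proof of Theorem~\ref{thm.main}; it works because $g$ is odd and $y^*_i$ carries parity opposite to $y^i$, so that upon using the generating-function identities $\der{\S}{y^*_i}=y^i$ and $\der{g}{y^i}=y^*_i$ the Koszul signs coming from the Schouten structure line up as in the even case. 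Once this is in hand, the remaining steps are formally identical to the even case, so I would not write them out in detail.
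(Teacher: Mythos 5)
Your proposal is correct and follows exactly the route the paper intends: the appendix explicitly omits this proof on the grounds that it ``goes along the same lines'' as Theorem~\ref{thm.main}, and your argument is precisely that transposition --- the derivative formula $T\Psi^*[g]=\f_g^*$, the cancellation giving $\der{f}{x^a}=\der{\S}{x^a}\bigl(x,\der{g}{y}\bigr)$, and the specialization of $\Psi$-relatedness to $y^*=\der{g}{y}$. You have also correctly isolated the only point where the odd setting requires care (the Koszul signs in the cancellation, which work out because $f$ and $g$ are odd so the substitution into the antimomentum slots is parity-consistent).
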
}

\new{An even multivector field $P\in\fun(\Pi T^*M)$ satisfying $\lsch P,P\rsch =0$
defines a \emph{homotopy Poisson structure} (or a \emph{$P_{\infty}$-structure}) on $M$ via the higher derived bracket construction~\cite{tv:higherder}.  That means     antisymmetric brackets of alternating parities  on $\fun(M)$ that make it into an $L_{\infty}$-algebra in the ``antisymmetric version'' and which are multiderivations w.r.t. ordinary multiplication. On the vector space $\Pi\!\fun(M)$ this induces an $L_{\infty}$-algebra structure in the ``symmetric version''.  With an abuse of language we still refer to $P$ as to a `Poisson tensor' on $M$. The homological vector field $\Q$ on the supermanifold $\pfunn(M)$ corresponding to this   $L_{\infty}$-structure has the Hamilton--Jacobi form
\begin{equation*}
    \Q = \int\limits_{M^{n|m}}\!  Dx\; P\Bigl(x,\der{f}{x}\,\Bigr)\var{}{f(x)}\,.
\end{equation*}}

\new{Let $M_1=(M_1,P_1)$ and $M_2=(M_2,P_2)$  be two homotopy Poisson  manifolds and let  $\Psi\subset \Pi T^*M_1\times (-\Pi T^*M_2)$ be a canonical relation as above.
\begin{coro}  If the Poisson tensors $P_1$ and $P_2$ are $\Psi$-related, then the mapping
\begin{equation*}
    \Psi^*\co \pfunn(M_2)\to \pfunn(M_1)
\end{equation*}
is an $L_{\infty}$-morphism of the corresponding $L_{\infty}$-algebras. \qed
\end{coro}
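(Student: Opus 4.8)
The plan is to deduce this corollary from the odd analogs of Theorems~\ref{thm.commut} and \ref{thm.main} stated above, in exactly the same way as the even-case corollary was deduced from Theorem~\ref{thm.main}. The key observation is that, under the higher derived bracket construction, the homotopy Poisson structure $P_i$ corresponds to the $L_{\infty}$-structure on $\pfunn(M_i)$ whose homological vector field is the Hamilton--Jacobi field $\Q_i=\X_{P_i}$. Since an $L_{\infty}$-morphism is, by the definition recalled above, a map of the underlying supermanifolds intertwining the homological vector fields, it suffices to show that $\Psi^*\co\pfunn(M_2)\to\pfunn(M_1)$ makes $\Q_2$ and $\Q_1$ into $\Psi^*$-related vector fields.

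First I would record that $\Q_i=\X_{P_i}$ is genuinely homological. By the odd analog of Theorem~\ref{thm.commut},
\begin{equation*}
    [\X_{P_i},\X_{P_i}]=(-1)^{\tilde{P}_i}\,\X_{\lsch P_i,P_i\rsch}\,,
\end{equation*}
and since $P_i$ is even and satisfies $\lsch P_i,P_i\rsch=0$, the right-hand side vanishes, so $\X_{P_i}^2=\tfrac12[\X_{P_i},\X_{P_i}]=0$. This also gives, in the spirit of the Remark following Theorem~\ref{thm.commut}, a direct proof that a homotopy Poisson tensor does define an $L_{\infty}$-algebra structure on $\pfunn(M_i)$.

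Next I would feed in the hypothesis. That $P_1$ and $P_2$ are $\Psi$-related means $p_1^*P_1=p_2^*P_2$, an identity which, written out along $\Psi$ by means of the odd generating function $\S$, holds for all values of the free fibre variables $y^*$; in particular it remains valid after the substitution $y^*_i=\lder{g}{y^i}$ for an arbitrary $g\in\pfunn(M_2)$. This is precisely the hypothesis of the odd analog of Theorem~\ref{thm.main}, whose conclusion is that $\X_{P_2}\in\Vect(\pfunn(M_2))$ and $\X_{P_1}\in\Vect(\pfunn(M_1))$ are $\Psi^*$-related. Identifying $\X_{P_i}$ with $\Q_i$, we obtain that $\Q_2$ and $\Q_1$ are $\Psi^*$-related, hence $\Psi^*$ is an $L_{\infty}$-morphism, as claimed.

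I do not anticipate a serious obstacle, since the corollary is essentially a repackaging of the odd analog of Theorem~\ref{thm.main}; the real content sits in that theorem, whose proof runs along the same lines as that of Theorem~\ref{thm.main}, with the Hamilton--Jacobi form of $\Q_i$ doing the essential work. The only points demanding attention are bookkeeping ones: the \emph{direction} of the resulting $L_{\infty}$-morphism (it goes the same way as $\Psi^*$, from the $L_{\infty}$-structure on $\pfunn(M_2)$ to that on $\pfunn(M_1)$, consistently with the variance of pullbacks), and the parity signs in the odd commutator identity, which are harmless here because the Poisson tensors are even.
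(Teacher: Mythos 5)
Your proposal is correct and follows exactly the route the paper intends: the corollary is a direct repackaging of the odd analog of Theorem~\ref{thm.main}, with the homological property of $\Q_i=\X_{P_i}$ secured by the odd commutator formula together with $\lsch P_i,P_i\rsch=0$. The paper gives no separate proof for this corollary (the appendix explicitly omits proofs that ``go along the same lines''), and your bookkeeping of the direction of the morphism and of the parity signs matches the paper's conventions.
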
}


\end{document}